\numberwithin{equation}{section}
\newcolumntype{P}[1]{>{\centering\arraybackslash}p{#1}}
\newcolumntype{M}[1]{>{\centering\arraybackslash}m{#1}}
\newtheorem{theorem}{Theorem}[section]
\newtheorem{proposition}[theorem]{Proposition}
\newtheorem{corollary}[theorem]{Corollary}
\newtheorem{lemma}[theorem]{Lemma}
\theoremstyle{definition}
\newtheorem{definition}[theorem]{Definition}
\newtheorem{remark}[theorem]{Remark}
\newcommand{\PP}{\mathbb{P}}
\newcommand{\CC}{\mathbb{C}}
\newcommand{\ZZ}{\mathbb{Z}}
\newcommand{\cO}{\mathcal{O} }
\newcommand{\cF}{\mathcal{F} }
\newcommand{\cH}{\mathcal{H} }
\newcommand{\cK}{\mathcal{K} }
\newcommand{\cQ}{\mathcal{Q} }
\newcommand{\cU}{\mathcal{U} }
\newcommand{\rG}{\mathrm{G} }
\newcommand{\rH}{\mathrm{H} }
\newcommand\bH{\mathbf{H}}
\newcommand\bM{\mathbf{M}}
\newcommand\bS{\mathbf{S}}
\def\Sym{\mathrm{Sym} }
\def\Hom{\mathrm{Hom} }
\def\Ext{\mathrm{Ext} }
\def\Gr{\mathrm{Gr} }
\def\lr{\rightarrow}
\newcommand{\rank}{\mathrm{rank}\, }
\newcommand{\ses}[3]{0\lr{#1}\lr{#2}\lr{#3}\lr 0}
\begin{document}

\title[Local structure of conics in quintic del Pezzo varieties]{Local structure of the Hilbert scheme of conics in quintic del Pezzo varieties}
\date{December 26, 2025}

\author{Kiryong Chung}
\address{Department of Mathematics Education, Kyungpook National University, 80 Daehakro, Bukgu, Daegu 41566, Korea}
\email{krchung@knu.ac.kr}

\author{Bomyeong Kim}
\address{Department of Mathematical Sciences, KAIST, 291 Daehak-ro, Yuseong-gu, Daejeon 34141, Republic of Korea}
\email{kkbm2000@kaist.ac.kr}

\author{Minseong Kwon}
\address{Morningside Center of Mathematics, Academy of Mathematics and Systems Science, Chinese Academy of Sciences, Beijing 100190, China}
\email{minseong@amss.ac.cn}

\keywords{Quintic del Pezzo variety, Conic, Torus fixed curve, Deformation-Obstruction space}
\subjclass[2020]{14C05, 14H10, 14J45}

\begin{abstract}
Let $X$ be the quintic del Pezzo $4$-fold. It is very well-known that $X$ is realized by a smooth linear section of Grassmannian $\mathrm{Gr}(2,5)$. In this paper, we prove that the Hilbert scheme of conics in $X$ is a smooth variety of dimension $7$ by using a torus action on $X$, which provides a more direct proof about the first named author's previous result.
\end{abstract}

\maketitle
\section{Introduction and results}
By definition, the quintic del Pezzo $4$-fold $X$ is a smooth projective variety such that the anti-canonical line bundle of $X$ is isomorphic to $-K_{X}\cong -3L$ for some ample generator $L\in \text{Pic}(X)\cong \ZZ$ with $L^{4}=5$. By the work of Fujita (\cite{Fuj81}), it is known that $X$ is isomorphic to a linear section of Grassmannian variety $\Gr(2,V_5)$, $\dim V_5=5$. For the presentation of the Pl\"{u}cker coordinates of $X$ with a torus action $T=\CC^*$, let us denote the vector space $V_5=\mathrm{span}_{\CC} \{e_0, e_1, e_2, e_3, e_4\}$ with weights $2$, $0$, $1$, $-1$, $-2$ respectively. Under the Pl\"{u}cker embedding of $\Gr(2, V_5)$, let us denote by $p_{ij}$ the Pl\"{u}cker coordinates of $\PP(\wedge^2V_5)$ where $p_{ij}(e_k\wedge e_l)=\delta_{ik}\delta_{jl}$ for $i <j$, $k<l$. From now on, for the fixed two hyperplanes
\begin{equation}\label{defhy}
H_1: p_{12}-p_{03}=0,\; H_{-1}: p_{13}-p_{24}=0.
\end{equation}
let us define the quintic del Pezzo fourfold $X$ as the intersection $X=\Gr(2,V_5)\cap H_1\cap H_{-1}$. Note that torus weights of the hyperplanes $H_i$ are given by $i\in\{\pm1\}$. For a fixed embedding of a smooth projective variety $Y\subset \PP^r$, let $\bH_d(Y)$ be the Hilbert scheme parameterizing curves $C$ in $Y$ with Hilbert polynomial $\chi(\cO_C(m))=dm+1$.  In this paper, by studying the torus fixed conics in the quintic del Pezzo variety $X$, we will show that $\bH_2(X)$ is smooth. In \cite{CHL18}, the authors proved this fact by an indirect method. That is, under the fact that $\bH_2(\Gr(2, 5))$ is smooth (\cite[Theorem 3.1]{IM09}), they checked that $\bH_2(X)$ is a reduced scheme by a general theory (i.e., generically reduced $+$ locally complete intersection $+$ Cohen-Macaulay space $\Rightarrow$ reduced). In this paper, the most technical part of proof is that there exist, so called, non-free lines in $X$ (Definition \ref{deffree}). To deal with this case, we need to consider the restricted normal bundle of singular conics. Related with this, in \cite{BF86}, the authors studied threefold case. As a generalization of this one, in the paper \cite[Appendix 2]{KPS18}, the author present a general form for any dimension. In this paper, by applying these result to $\CC^*$-fixed conics and computing the deformation spaces, we obtain
\begin{theorem}[Section 3.4]\label{mainthm}
The Hilbert scheme $\bH_2(X)$ is a smooth variety. Furthermore, the space $\bH_2(X)$ is a rational variety.
\end{theorem}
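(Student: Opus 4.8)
The plan is to prove smoothness by showing that the obstruction space $H^{1}(C,N_{C/X})$ vanishes at every point $[C]\in\bH_2(X)$, and to reduce this to a finite check using the $T$-action. First I would record the numerics: for every conic $C\subset X$ the normal sheaf $N_{C/X}$ has rank $3$ and degree $4$ (by adjunction and Riemann--Roch), so $\chi(N_{C/X})=7$ is independent of $[C]$. Hence $h^{0}(N_{C/X})=\chi(N_{C/X})+h^{1}(N_{C/X})\ge 7$, with equality precisely when $H^{1}(N_{C/X})=0$, in which case $\bH_2(X)$ is smooth of dimension $7$ at $[C]$. Since the $T$-action makes $[C]\mapsto h^{0}(N_{C/X})$ (the dimension of the Zariski tangent space of $\bH_2(X)$ at $[C]$) an upper semicontinuous function that is constant along $T$-orbits, and since $\bH_2(X)$ is projective so that the limit $[C_0]=\lim_{t\to 0}t\cdot [C]$ exists and is $T$-fixed, one has $h^{0}(N_{C/X})\le h^{0}(N_{C_0/X})$. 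It therefore suffices to prove $h^{0}(N_{C_0/X})=7$ for every $T$-fixed conic $C_0$: this squeezes $7\le h^{0}(N_{C/X})\le 7$ for all $[C]$, giving $H^{1}(N_{C/X})=0$ and smoothness everywhere.

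Second, I would enumerate the $T$-fixed conics. The weights $2,0,1,-1,-2$ on $V_5$ induce weights on the Pl\"ucker coordinates, and a direct inspection shows that $X$ contains exactly six isolated $T$-fixed points---the coordinate planes $\langle e_i,e_j\rangle$ compatible with \eqref{defhy}---and no positive-dimensional fixed locus (the two weight-zero coordinates $p_{04},p_{23}$ span a line meeting $\Gr(2,V_5)$ only in those two coordinate points). A $T$-invariant conic is then of one of three types: a smooth conic on which $T$ acts nontrivially, with its two fixed points among the six; a reducible conic $L_1\cup L_2$ of two $T$-invariant lines meeting at a fixed point; or a double line $2L$ supported on a $T$-invariant line $L$. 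Listing the $T$-invariant lines (the closures of the one-dimensional orbits, joining pairs of fixed points) and assembling them reduces the problem to finitely many explicit curves.

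Third comes the normal-bundle computation, where the real work lies. For each fixed conic I would compute $h^{0}(N_{C_0/X})$ from the normal sequence $0\to N_{C_0/X}\to N_{C_0/\Gr(2,V_5)}\to N_{X/\Gr}|_{C_0}\to 0$, in which $N_{X/\Gr}|_{C_0}\cong \cO_{C_0}(1)^{\oplus 2}$, using the smoothness of $\bH_2(\Gr(2,5))$ from \cite{IM09} (so $H^{1}(N_{C_0/\Gr})=0$). This identifies $H^{1}(N_{C_0/X})$ with the cokernel of $H^{0}(N_{C_0/\Gr})\to H^{0}(\cO_{C_0}(1))^{\oplus 2}$, and the task is to prove this map surjective. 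The main obstacle is the existence of non-free lines in $X$ (Definition \ref{deffree}): when $C_0=L_1\cup L_2$ or $C_0=2L$ has a component $L$ that is non-free, $N_{C_0/X}$ acquires negative-degree summands and surjectivity is not automatic. To control this I would feed the explicit description of the normal bundle of a conic from \cite[Appendix 2]{KPS18} (generalizing the threefold analysis of \cite{BF86}) into the restriction sequences $0\to N_{C_0/X}|_{L_1}(-p)\to N_{C_0/X}\to N_{C_0/X}|_{L_2}\to 0$ at the node $p$, and verify directly that $h^{0}(N_{C_0/X})=7$ in every case, the negative summands contributing no $H^{1}$.

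Finally, for rationality I would exhibit an explicit birational model. Sending a smooth conic to the plane $\PP^{2}$ it spans inside $\PP^{7}=H_1\cap H_{-1}$ embeds the dense open locus of smooth conics into $\Gr(3,8)$, and I would identify this locus with an open subset of a projective bundle over a rational base arising from the pencils of planes through the conics; since the smooth conics are dense, rationality of $\bH_2(X)$ follows. The genuinely hard step is the third one: tracking the negative summands for conics that pass through non-free lines is exactly the subtlety that the $T$-action isolates into a short, finite list.
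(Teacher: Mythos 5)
Your smoothness strategy coincides with the paper's: reduce to $\CC^*$-fixed conics, then kill $H^1(N_{C/X})$ case by case. Your reduction mechanism (upper semicontinuity of the tangent-space dimension plus constancy of $\chi(N_{C/X})=7$) is a sound substitute for the paper's combination of Lemma \ref{fsws} and Koll\'ar's criterion, with two caveats: the constancy of $\chi$ needs a word of justification for reducible and non-reduced conics (it holds because every conic is a local complete intersection), and the fixed locus of $\bH_2(X)$ is \emph{not} finite --- the weights on the $\sigma_{2,2}$-plane $S$ form an arithmetic progression, so there are one-parameter families of invariant smooth conics (Propositions \ref{fixcon22} and \ref{fixcon23}); this is harmless since the computation is uniform in the parameter. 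For rationality you take a different route from the paper's written proof (which computes the tangent weights at the fixed double line (A-2) and invokes Theorem \ref{prop:onetoone}), but your ``conic $\mapsto$ spanning plane'' map is essentially Proposition \ref{bidiagconic} in disguise: the step missing from your sketch --- identifying the image in $\Gr(3,8)$ with something visibly rational --- is precisely to record in addition the $4$-space $V_4$ spanned by the scroll of the conic, which exhibits the model $\bS(X)$ as a $\Gr(3,4)$-bundle over $\Gr(4,5)$. Both routes work, and yours is arguably more elementary.

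The genuine gap is in the hard case, and it sits exactly under your phrase ``the negative summands contributing no $H^{1}$.'' For a double line supported on a non-free line (other than the cases (A-2), (B-2)), and for a pair of non-free lines, the restricted bundle is $N_{C/X}|_L\cong\cO_L(-1)\oplus\cO_L(1)\oplus\cO_L(2)$ (Lemmas \ref{resno} and \ref{lm-pair}), so the twisted piece appearing in \eqref{struse} (resp.\ \eqref{reno2}) has $H^1(N_{C/X}|_L(-1))\cong\CC\neq 0$: the negative summand \emph{does} contribute, and $H^1(N_{C/X})=0$ holds only because the connecting homomorphism $H^0(N_{C/X}|_L)\to H^1(N_{C/X}|_L(-1))$ is surjective. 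Proving that surjectivity is the technical heart of the paper (the explicit \v{C}ech cocycle computation in Section 3.4), and in the remaining cases (A-2), (B-2) the vanishing holds for a different reason altogether: the extension \eqref{renormal1} is non-split and $N_{D/X}|_L\cong\cO_L\oplus\cO_L(1)^{\oplus 2}$ (Lemma \ref{resno1}). Neither phenomenon follows formally from the sequences of \cite{KPS18}. Your alternative packaging --- surjectivity of $H^0(N_{C_0/\Gr})\to H^0(\cO_{C_0}(1))^{\oplus 2}$, using $H^1(N_{C_0/\Gr})=0$ from convexity of the Grassmannian (cf.\ Proposition \ref{consm}) --- is equivalent to what must be proved, but it is a restatement rather than a proof: for the non-reduced fixed conics one still needs an explicit chart-level computation of the normal sheaf and of that map, i.e., work of exactly the kind carried out in Lemmas \ref{resno1}--\ref{lm-pair} and Section 3.4, which your proposal defers rather than supplies.
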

On the other hand, in the excellent paper \cite{IM09}, the author reconstructed a compact hyper-K\"{a}hler manifold of dimension $4$ by using the Hilbert scheme of conics in a Fano $4$-fold, where it was defined by a hyperplane and hyperquadric section of $\Gr(2, 5)$. This paper may play a role to well understand the stream and the proof of the result \cite{IM09} related with geometry of conics space. As final comment, by this paper, we confirm again that the Hilbert scheme of conics in any quintic del Pezzo variety is always smooth whenever it exists (\cite[Lemma 3.2]{CHL18}, \cite[Proposition 1.2.2]{Ili94}, Theorem \ref{mainthm} and Proposition \ref{consm}).
\subsection*{Acknowledgements}
The first named author gratefully acknowledges encouraging comments of Jun-Muk Hwang for the deformation space of a double line supported on a \emph{non-free lines} and the question (through an email to the first named author) of O. Debarre related to an explicit citation of the smoothness of Hilbert scheme.
The authors would like to thank Lorenzo Barban for valuable comments on the draft of this paper.
The third named author was partially supported by the National Research Foundation of Korea (NRF) grant funded by the Korea government (MSIT) (RS-2021-NR062093).

\section{Preliminary}
For later use, let us collect well-known facts.
\subsection{General results related with a torus action}\label{bbthm}
The following lemma is an immediate consequence of the Borel fixed point theorem:
\begin{lemma}\label{fsws}
Let $\bM$ be a closed subscheme of a projective space $\PP^r$. Assume that $\bM$ is equipped with a $\CC^*$-action. If $\bM$ is smooth at each point in $\bM^{\CC^*}$, then $\bM$ is a smooth variety.
\end{lemma}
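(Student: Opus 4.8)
The plan is to argue by contradiction, reducing the whole statement to a single application of the Borel fixed point theorem to the non-smooth locus of $\bM$. First I would set $Z\subseteq \bM$ to be the locus of points at which $\bM$ fails to be smooth. Since $\bM$ is of finite type over $\CC$, its smooth locus is open, so $Z$ is a closed subscheme of $\bM$; being closed in $\PP^r$, it is in particular a \emph{complete} subscheme of the projective space. The completeness here is essential, and it is what lets Borel's theorem enter the picture.

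Next I would verify that $Z$ is $\CC^*$-invariant. The $\CC^*$-action is by automorphisms of $\bM$, and every automorphism carries the smooth locus isomorphically to itself; hence it preserves the complementary closed set $Z$. Restricting the action, $\CC^*$ therefore acts on the reduced closed subvariety $Z_{\mathrm{red}}$. The heart of the argument is then immediate: the torus $\CC^*$ is a connected solvable linear algebraic group, so if $Z_{\mathrm{red}}$ were nonempty it would be a nonempty complete variety carrying a $\CC^*$-action, and by the Borel fixed point theorem it would contain a $\CC^*$-fixed point $z$. But then $z\in \bM^{\CC^*}$, while by hypothesis $\bM$ is smooth at every point of $\bM^{\CC^*}$, contradicting $z\in Z$. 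Hence $Z=\emptyset$, i.e.\ $\bM$ is smooth at every point. Since smoothness over $\CC$ forces $\bM$ to be regular, in particular reduced, $\bM$ is a smooth variety as claimed.

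The argument is short because the geometry is entirely carried by the two structural observations that the non-smooth locus is closed and $\CC^*$-stable. The only point that demands slight care, and which I expect to be the main (and rather minor) obstacle, is the correct invocation of Borel's theorem: one must use that $Z$ is closed in $\PP^r$ — not merely in $\bM$ — in order to guarantee completeness, and one must pass to $Z_{\mathrm{red}}$ since the fixed point theorem is phrased for varieties. Neither of these requires computation, so the proof should remain a clean two-line deduction from the fixed point theorem.
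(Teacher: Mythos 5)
Your argument is correct and is exactly the deduction the paper has in mind: the paper offers no written proof, stating only that the lemma ``is an immediate consequence of the Borel fixed point theorem,'' and your proof (non-smooth locus is closed and $\CC^*$-stable, hence if nonempty would contain a fixed point by Borel, contradicting the hypothesis) is precisely that intended argument. The points you flag --- completeness of the non-smooth locus inside $\PP^r$ and passing to the reduced structure --- are handled correctly, so nothing is missing.
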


The following easy lemma will be used several times, and so we put the statement.
\begin{lemma}[\protect{\cite[Lemma~3.1]{CKK25}}]\label{flocp}
Let $V$ be the finite-dimensional vector space with the diagonal $\CC^*$-action of distinct weights. If the repetition of weights of the coordinate points in $\PP(\wedge^2V)$ is less than or equal to two, the fixed loci of the Grassmannian variety $\Gr(2, V)$ are isolated.
\end{lemma}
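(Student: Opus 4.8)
The plan is to identify the $\CC^*$-fixed points of $\Gr(2,V)$ with the decomposable eigenvectors of the induced action on $\wedge^2 V$, and then to analyze them one weight space at a time. Concretely, a point $[U]\in\Gr(2,V)$ is fixed if and only if the $2$-plane $U\subset V$ is $\CC^*$-invariant; under the Pl\"ucker embedding $\Gr(2,V)\hookrightarrow\PP(\wedge^2V)$ this is equivalent to the Pl\"ucker point $[\wedge^2U]$ being a $\CC^*$-eigenvector, i.e. lying in one of the weight spaces $W_\mu=\mathrm{span}\{e_i\wedge e_j : w_i+w_j=\mu\}$, where $w_i$ denotes the weight of $e_i$. (The correspondence is clean in both directions, since Pl\"ucker injectivity lets one recover $U$ from the eigenline $\wedge^2U$.) Thus the fixed locus decomposes as a disjoint union over $\mu$ of the sections $\Gr(2,V)\cap\PP(W_\mu)$, and it suffices to show each such section is finite.

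First I would dispose of the weights $\mu$ with $\dim W_\mu=1$: here $\PP(W_\mu)$ is a single coordinate point $[e_i\wedge e_j]$, which is decomposable and hence a genuine isolated fixed point. The repetition hypothesis guarantees that the only remaining case is $\dim W_\mu=2$, say $W_\mu=\langle e_i\wedge e_j,\,e_k\wedge e_l\rangle$ with $\{i,j\}\neq\{k,l\}$ and $w_i+w_j=w_k+w_l=\mu$. The crucial observation is that the distinctness of the weights on $V$ forces the index sets $\{i,j\}$ and $\{k,l\}$ to be \emph{disjoint}: if they shared an index, cancelling it in $w_i+w_j=w_k+w_l$ would equate two of the $V$-weights, a contradiction.

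Next I would run the standard decomposability test. Writing a general element of $W_\mu$ as $\omega=a\,e_i\wedge e_j+b\,e_k\wedge e_l$, the point $[\omega]$ lies on $\Gr(2,V)$ if and only if $\omega\wedge\omega=0$. A direct computation gives $\omega\wedge\omega=2ab\,(e_i\wedge e_j\wedge e_k\wedge e_l)$, and since $\{i,j\}$ and $\{k,l\}$ are disjoint the four-vector $e_i\wedge e_j\wedge e_k\wedge e_l$ is a nonzero basis element of $\wedge^4V$. Hence $\omega\wedge\omega=0$ forces $ab=0$, so $[\omega]$ is one of the two coordinate points $[e_i\wedge e_j]$ or $[e_k\wedge e_l]$. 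Therefore $\Gr(2,V)\cap\PP(W_\mu)$ consists of exactly two points, and in particular is finite.

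The only real subtlety, which I would treat as the heart of the argument, is precisely this interplay between the two hypotheses in the $\dim W_\mu=2$ case: the repetition bound keeps every fixed linear subspace $\PP(W_\mu)$ at most a line, while the distinctness of the $V$-weights guarantees the two surviving Pl\"ucker monomials are supported on disjoint indices, so that the Pl\"ucker quadric cuts the line down to its two endpoints rather than containing it entirely. Had the indices overlapped, $\omega\wedge\omega$ would vanish identically and the whole line $\PP(W_\mu)$ would lie in $\Gr(2,V)$, producing a positive-dimensional fixed component; it is exactly this degeneration that the hypotheses are designed to exclude. Collecting the finitely many weights $\mu$ and the at most two points contributed by each then shows $\Gr(2,V)^{\CC^*}$ is a finite set, i.e. the fixed loci are isolated.
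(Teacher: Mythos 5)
Your proof is correct and complete. Note that the paper itself offers no proof of this lemma to compare against---it is imported by citation from \cite[Lemma~3.1]{CKK25}---but your argument is exactly the natural one: the equivariant Pl\"ucker embedding reduces the fixed locus to $\bigsqcup_\mu \bigl(\Gr(2,V)\cap\PP(W_\mu)\bigr)$, the repetition bound gives $\dim W_\mu\le 2$, the distinctness of the weights on $V$ forces the two surviving monomials $e_i\wedge e_j$, $e_k\wedge e_l$ to have disjoint index sets, and then the decomposability criterion $\omega\wedge\omega=0$ yields $ab=0$, so each $\PP(W_\mu)$ meets the Grassmannian in at most two coordinate points. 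You also correctly isolate the only delicate point, namely that an overlap of indices would make $\omega\wedge\omega$ vanish identically and produce a positive-dimensional fixed component, which is precisely what the two hypotheses jointly rule out; this matches how the lemma is applied in the paper (e.g.\ in Proposition~\ref{nofreel}, where the weights $2,0,1,-1,-2$ give pairwise sums each repeated at most twice).
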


Let $Y$ be a smooth projective variety with a $\CC^*$-action. Then the $\CC^*$-invariant loci of $Y$ are decomposed into connected components $Y^{\CC^*}=\bigsqcup\limits_{i} Y_i$. For each $Y_i$, the $\CC^*$-action on the tangent bundle $TY|_{Y_i}$ provides a decomposition as $TY|_{Y_i}=T^+\oplus T^{0}\oplus T^{-}$ where $T^+$, $T^{0}$ and $T^{-}$ are the subbundles of $TY|_{Y_i}$ such that the group $\CC^*$ acts with positive, zero and negative weights respectively. Under the local linearization, $T^0\cong TY_i$ and
\[
T^+\oplus T^{-}=N_{Y_i/Y}=N^{+}(Y_i)\oplus N^{-}(Y_i)
\]
is the decomposition of the normal bundle of $Y_i$ in $Y$. Let  $\mu^{\pm}(Y_i)=\rank N^{\pm}(Y_i)$. A fundamental result in the theory of $\CC^*$-action on $Y$ has been provided by A. Bia{\l}ynicki-Birula. Let
\[
Y^{+}(Y_i)=\{y\in Y \mid \lim_{t\to 0} t\cdot y \in Y_i\} \; \text{and}\; Y^{-}(Y_i)=\{y\in Y \mid \lim_{t\to \infty} t\cdot y \in Y_i\}.
\]
\begin{theorem}[\protect{\cite{Bir73}}]\label{prop:onetoone}
Under the above assumptions and notations,
\begin{enumerate}
\item $Y=\bigsqcup\limits_{i} Y^{\pm}(Y_i)$ and
\item For each component $Y_i$, the map $Y^{+}(Y_i) \rightarrow Y_{i}$, $y \mapsto \lim_{t \rightarrow 0} t \cdot y$ (resp. $Y^{-}(Y_i) \rightarrow Y_{i}$, $y \mapsto \lim_{t \rightarrow \infty} t \cdot y$) is a Zariski locally trivial fibration whose fiber is the affine space $\CC^{\mu^{+}(Y_i)}$ (resp. $\CC^{\mu^{-}(Y_i)}$).
\end{enumerate}
In particular, if $\mu^{\pm}(Y_i)=\dim Y$ for some $i$ and $Y$ is connected, then $Y$ is a rational variety.
\end{theorem}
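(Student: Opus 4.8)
The plan is to prove both the cell decomposition of part (1) and the affine-bundle structure of part (2) by reducing, near each fixed component, to the linear model on the tangent space, and then globalizing; throughout I use that $Y$ is projective (hence proper) and smooth. I first establish existence of limits. For any $y\in Y$, the orbit map $\sigma_y\colon \CC^*\to Y$, $t\mapsto t\cdot y$, extends by the valuative criterion of properness to a morphism $\wtil{\sigma}_y\colon \PP^1\to Y$; the images of $0$ and $\infty$ are $\CC^*$-fixed and realize $\lim_{t\to 0} t\cdot y$ and $\lim_{t\to\infty} t\cdot y$. Since the components $Y_i$ are disjoint and closed, each $y$ is assigned to exactly one $Y_i$ by each limit, yielding the set-theoretic partitions $Y=\bigsqcup_i Y^{+}(Y_i)=\bigsqcup_i Y^{-}(Y_i)$ of part (1).

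The heart of the argument is the local structure near a fixed component, which is where I expect the real work. Fix a component $Y_i$ and a point $p\in Y_i$. The induced $\CC^*$-action on $T_pY$ is linear and diagonalizable, giving the weight decomposition $T_pY=T_p^{+}\oplus T_p^{0}\oplus T_p^{-}$ with $T_p^{0}=T_pY_i$. The key step is a $\CC^*$-equivariant local linearization: there is a $\CC^*$-stable neighborhood of $p$ equivariantly isomorphic to a neighborhood of the origin in $T_pY$. One obtains this by embedding $Y$ $\CC^*$-equivariantly into a projective space $\PP^N$ carrying a linear (diagonal) action, and analyzing the induced action in an affine chart through $p$; smoothness of $Y$ lets one split off the tangent directions into semi-invariant coordinates. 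In the linear model the attracting set $\{v : \lim_{t\to 0} t\cdot v \text{ exists}\}$ is exactly $T_p^{+}\oplus T_p^{0}$, and the retraction $v\mapsto \lim_{t\to 0} t\cdot v$ is the linear projection onto $T_p^{0}$, whose fibers are copies of $T_p^{+}\cong \CC^{\mu^{+}(Y_i)}$. This identifies $Y^{+}(Y_i)$, locally over $Y_i$, with the total space of the bundle $N^{+}(Y_i)$.

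I would then globalize. Running the previous step over an affine open cover $\{U_\alpha\}$ of $Y_i$ that trivializes $N^{+}(Y_i)$ produces isomorphisms of the preimages in $Y^{+}(Y_i)$ with $U_\alpha\times \CC^{\mu^{+}(Y_i)}$ compatible with the retraction $Y^{+}(Y_i)\to Y_i$; this exhibits the retraction as a Zariski locally trivial affine-space fibration with fiber $\CC^{\mu^{+}(Y_i)}$, proving part (2), and the same argument with $t\to\infty$ handles the $-$ case. For the rationality statement, the decomposition $TY|_{Y_i}=T^{+}\oplus T^{0}\oplus T^{-}$ gives $\mu^{+}(Y_i)+\mu^{-}(Y_i)+\dim Y_i=\dim Y$, so $\mu^{+}(Y_i)=\dim Y$ forces $\dim Y_i=0$. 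Then $Y^{+}(Y_i)\cong \CC^{\dim Y}$ is a cell of top dimension, hence dense in the irreducible variety $Y$ (smooth and connected); being isomorphic to affine space, it shows $Y$ is birational to $\AA^{\dim Y}$, i.e.\ rational.

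The main obstacle is the equivariant local linearization in the middle paragraph: passing from the tangent-level weight decomposition to an honest Zariski-local trivialization of the attracting set requires care, and is precisely the point where smoothness of $Y$ and the existence of a linear $\CC^*$-equivariant projective embedding are essential. Once that local model is in hand, the decomposition into cells and the affine-bundle structure follow formally, and the rationality corollary is an immediate dimension count.
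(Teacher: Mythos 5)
The paper offers no proof of this statement at all: it is quoted verbatim from Bia{\l}ynicki-Birula's 1973 paper and used as a black box, so your proposal has to stand on its own merits. It does not, because the step you yourself identify as the heart of the argument is false as stated. You claim a $\CC^*$-stable Zariski neighborhood of a fixed point $p$ that is equivariantly isomorphic to a neighborhood of the origin in $T_pY$. Take $Y=C\times\PP^1$ with $C$ a smooth projective curve of genus $\geq 1$ and $\CC^*$ acting on the second factor; the fixed components are $C\times\{0\}$ and $C\times\{\infty\}$. Every Zariski-open neighborhood of a fixed point $(c,0)$ dominates an open subset of $C$ and hence is not rational, whereas every open neighborhood of $0\in T_pY\cong\CC^2$ is rational --- so no such isomorphism exists, even non-equivariantly. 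What smoothness actually gives you is linearization at the level of completed local rings (semi-invariant parameters inducing an equivariant isomorphism $\widehat{\cO}_{Y,p}\cong\widehat{\cO}_{T_pY,0}$), or an \'etale-local statement; neither lets you simply ``transfer'' the description of the attracting set and its retraction from the linear model, and your globalization paragraph then presupposes precisely the fibration structure to be proved when it patches trivializations over a cover trivializing $N^{+}(Y_i)$. Note also that Zariski-local triviality is strictly stronger than \'etale-local triviality for $\AA^n$-fibrations, so even a correctly executed \'etale linearization would not finish part (2) without further argument.

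The parts of your proposal that do work are the existence of limits via the valuative criterion of properness, the resulting set-theoretic partitions in part (1), and the concluding dimension count for rationality ($\mu^{+}(Y_i)+\mu^{-}(Y_i)+\dim Y_i=\dim Y$, so a full-dimensional cell forces $\dim Y_i=0$ and yields a dense open copy of $\CC^{\dim Y}$). To repair the middle, the standard route (essentially Bia{\l}ynicki-Birula's) is: embed $Y$ equivariantly in $\PP(V)$ with $V$ a $\CC^*$-module, cover $Y$ by invariant affine opens (a Sumihiro-type statement), and for an invariant affine $\Spec A$ with grading $A=\bigoplus_n A_n$ show that the attracting set is cut out by the ideal generated by the negative-weight part; then smoothness of $Y$ along $Y_i$ lets one lift a local basis of the conormal module of $Y_i$ to positive-weight semi-invariant functions, and these, together with functions pulled back from $Y_i$, furnish the Zariski-local trivialization of $Y^{+}(Y_i)\to Y_i$ over invariant affine opens of $Y_i$. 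Alternatively one can invoke the modern functorial treatment (Drinfeld, Jelisiejew--Sienkiewicz) of the attractor functor. As written, however, your proof has a genuine gap at its central step.
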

\subsection{Lines and planes in $X$}
It is well-known that the Hilbert scheme of lines in $\Gr(2, V_5)$ is isomorphic to \[\bH_1(\Gr(2, V_5))\cong \Gr(1, 3, V_5)\] by \cite[Exercise 6.9]{Har95}. A line in $\Gr(2, V_5)$ is determined by a pencil of lines passing through a point (so called, \emph{vertex}). Also, under the natural embedding $\bH_1(X)\subset \bH_1(\Gr(2, V_5))$, we have a $\CC^*$-equivariant projection map
\begin{equation}\label{blmap}
p:\bH_1(X)\subset\bH_1(\Gr(2, V_5))\cong \Gr(1, 3, V_5)\lr \Gr(1, V_5)
\end{equation}
which associates to a line its \emph{vertex}.
\begin{proposition}[\protect{\cite[Proposition 2.7]{Pro94} and \cite[Lemma 6.3]{CHL18}}]\label{linespace}
The composition map
\begin{equation}\label{blqd4}
p: \bH_1(X)\to \Gr(1,V_5)
\end{equation}
in \eqref{blmap} is a smooth blow-up along a smooth conic $C_{v}\subset \Gr(1,V_5)$. Here, the equation of the conic $C_{v}$ (called the \emph{vertex conic}) is given by
\begin{equation}\label{vconic}
C_{v}=\{[a_0:a_1:a_2:a_3:a_4]\mid a_0a_4+a_1^2=a_2=a_3=0\}\subset \Gr(1,V_5).
\end{equation}
\end{proposition}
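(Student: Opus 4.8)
The plan is to analyze the morphism $p$ fibre by fibre, to identify its degeneracy locus with the conic $C_v$, and then to recognize $p$ as the closure of the graph of the rational section $[v]\mapsto K(v)$ whose base ideal sheaf is exactly $\mathcal{I}_{C_v}$. Under the identification $\bH_1(\Gr(2,V_5))\cong\Gr(1,3,V_5)$ of \eqref{blmap}, a line is a flag $(V_1\subset V_3)$, and its pencil of $2$-planes lies in $X=\Gr(2,V_5)\cap H_1\cap H_{-1}$ if and only if $\phi_1(v,z)=\phi_{-1}(v,z)=0$ for a generator $v$ of $V_1$ and all $z\in V_3$, where $\phi_{\pm1}\in\wedge^2V_5^*$ are the alternating forms cutting out $H_{\pm1}$.

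First I would describe the fibres. Writing $K(v)=\{z\in V_5\mid \phi_1(v,z)=\phi_{-1}(v,z)=0\}$, the fibre $p^{-1}([v])$ is the set of $3$-planes with $V_1\subset V_3\subseteq K(v)$, namely $\Gr(2,K(v)/V_1)$. Since $\phi_{\pm1}$ are alternating we have $v\in K(v)$, and $\dim K(v)=5-\rank M(v)$, where $M(v)$ is the $2\times5$ matrix with rows $\phi_1(v,-)$ and $\phi_{-1}(v,-)$, whose entries are linear in $v$. As the radicals of $\phi_1$ and $\phi_{-1}$ are $\langle e_4\rangle$ and $\langle e_0\rangle$, which meet only in $0$, one has $\rank M(v)\in\{1,2\}$; thus $p^{-1}([v])$ is a single point when $\rank M(v)=2$ (forcing $V_3=K(v)$) and a $\PP^2=\Gr(2,3)$ when $\rank M(v)=1$. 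The condition $\rank M(v)=1$ is the vanishing of the ten $2\times2$ minors of $M(v)$, and a direct computation shows these minors generate, up to saturation, the ideal $(v_2,\,v_3,\,v_1^2+v_0v_4)$; hence $\rank M(v)=1$ precisely on the conic $C_v$ of \eqref{vconic}.

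Next I would read off the blow-up. Over $\Gr(1,V_5)\setminus C_v$ the map $p$ is an isomorphism, and the assignment $[v]\mapsto K(v)$, equivalently its annihilator $\mathrm{span}\bigl(\phi_1(v,-),\phi_{-1}(v,-)\bigr)\in\Gr(2,V_5^*)\cong\Gr(3,V_5)$, is a rational map $\psi$ whose Plücker coordinates are exactly those ten $2\times2$ minors. Therefore $\bH_1(X)\subset\Gr(1,V_5)\times\Gr(3,V_5)$ is the closure of the graph of $\psi$. Since the closure of the graph of a map defined by forms with base-ideal sheaf $\mathcal{J}$ is the blow-up $\Bl_{\mathcal{J}}\Gr(1,V_5)$, and since the ten minors cut out $C_v$ scheme-theoretically after saturation so that $\mathcal{J}=\mathcal{I}_{C_v}$ as ideal sheaves, this graph closure is $\Bl_{C_v}\Gr(1,V_5)$, a smooth blow-up along the smooth conic $C_v$.

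The hard part is to upgrade this birational identification to an isomorphism of schemes, that is, to know that $\bH_1(X)$ equals the graph closure with no extra or embedded components over $C_v$; equivalently, that $\bH_1(X)$ is smooth. I would obtain this from deformation theory: the Hilbert scheme is smooth of dimension $4$ at $[\ell]$ exactly when $H^1(\ell,N_{\ell/X})=0$, i.e. when $N_{\ell/X}=\bigoplus_{i=1}^{3}\cO_{\PP^1}(a_i)$ has all $a_i\geq-1$. As $\deg N_{\ell/X}=(-K_X\cdot\ell)-2=1$, smoothness can fail only through a summand $\cO(\leq-2)$, which can arise solely for the special \emph{non-free} lines lying over $C_v$. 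By Lemma \ref{fsws} it suffices to check $H^1(N_{\ell/X})=0$ at the $\CC^*$-fixed lines, so the real obstacle is the normal-bundle computation at these fixed non-free lines, showing that the most negative summand is $\cO(-1)$ and not $\cO(-2)$. Once this is established, $\bH_1(X)$ is smooth of dimension $4$, hence integral with the graph locus dense, and therefore coincides with $\Bl_{C_v}\Gr(1,V_5)$, exhibiting $p$ as the asserted smooth blow-up.
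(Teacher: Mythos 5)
Your overall strategy is sound, and most of what you write is verifiably correct: the identification of $\bH_1(X)$ with flags $(V_1\subset V_3)$ satisfying $\phi_{\pm1}(V_1,V_3)=0$, the fibre analysis of $p$ via the rank of the $2\times 5$ matrix $M(v)$ (the radicals of $\phi_1$ and $\phi_{-1}$ are indeed $\langle e_4\rangle$ and $\langle e_0\rangle$), the computation that the ten $2\times2$ minors are $v_3^2,\,v_2^2,\,v_1v_2,\,v_1v_3,\,v_0v_2,\,v_0v_3,\,v_2v_3,\,v_2v_4,\,v_3v_4,\,v_1^2+v_0v_4$, whose saturation is exactly $(v_2,v_3,v_1^2+v_0v_4)=I_{C_v}$, and the standard fact that the graph closure of the rational map defined by these quadrics is $\Bl_{\mathcal{I}_{C_v}}\Gr(1,V_5)$. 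Note, for calibration, that the paper itself does not prove this proposition at all — it is imported from \cite[Proposition 2.7]{Pro94} and \cite[Lemma 6.3]{CHL18} — so yours is necessarily a different route; it is very much in the spirit of the paper's later arguments for $\bH_2(X)$ (reduction to $\CC^*$-fixed points via Lemma \ref{fsws} plus deformation theory).

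However, as written there is one genuine gap, and it sits exactly at the crux. The entire upgrade from ``birational identification'' to ``isomorphism of schemes'' rests on $\rH^1(N_{\ell/X})=0$ at every $\CC^*$-fixed line $\ell$ (all nine of them: the four non-free ones of Table \ref{table_1} \emph{and} the five free ones), and you explicitly defer this: ``the real obstacle is the normal-bundle computation at these fixed non-free lines\ldots Once this is established\ldots''. Announcing the obstacle is not overcoming it; without that computation nothing excludes a summand $\cO_\ell(-2)$, hence nothing excludes non-reducedness or embedded structure of $\bH_1(X)$ along $p^{-1}(C_v)$, and the conclusion fails to follow. The gap is fillable — the required statement is precisely \eqref{norinx}, quoted in the paper from \cite[Lemma 1.6]{PZ16}, or it can be established by a transition-matrix computation in Pl\"ucker charts of the same kind the paper carries out in Lemmas \ref{resno1} and \ref{resno} — but a blind proof must either cite that result or do the computation. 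A smaller point: your parenthetical that a summand $\cO(\leq -2)$ ``can arise solely for the special non-free lines lying over $C_v$'' is itself unproven in your argument (the location of the non-free locus over $C_v$ is \cite[Corollary 3.2]{Chu23}, whose proof presupposes the blow-up description you are trying to establish, so leaning on it would be circular); fortunately this remark is inessential, since Lemma \ref{fsws} reduces the smoothness check to the fixed lines regardless of where the non-free lines sit.
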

The normal bundle $N_{L/X}$ of a line $L$ in $X$ is one of the following types (\cite[Lemma 1.6]{PZ16}):
\begin{equation}\label{norinx}
N_{L/X}\cong \cO_L(1)\oplus \cO_L^{\oplus 2}\; \mathrm{or} \; \cO_L(-1)\oplus \cO_L(1)^{\oplus 2}.
\end{equation}
\begin{definition}\label{deffree}
In \eqref{norinx}, let us call a line of the first type (resp. the second type) \emph{free line} (resp. \emph{non-free line}).
\end{definition}
In \cite[Corollary 3.2]{Chu23}, the author proved that the locus of non-free lines is isomorphic to a $\PP^1$-fibration over the vertex conic $C_{v}(\cong \PP^1)$, which is a sublocus of the exceptional divisor of the blow-up map $p$ in \eqref{blqd4}.
\begin{proposition}\label{nofreel}
There are exactly four $\CC^*$-fixed, non-free lines in $X$ (Table \ref{table_1}).
\end{proposition}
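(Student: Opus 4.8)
The plan is to reduce the count to a fixed-point analysis on the vertex conic $C_v$, transported through the $\PP^1$-fibration of non-free lines. First I would observe that the $\CC^*$-fixed points of $\bH_1(X)$ are isolated and completely explicit. Indeed, a $\CC^*$-fixed line is a fixed point of $\bH_1(\Gr(2,V_5))\cong\Gr(1,3,V_5)$, and since the weights $2,0,1,-1,-2$ of $e_0,\dots,e_4$ are pairwise distinct, the only invariant flags are the coordinate flags $\langle e_a\rangle\subset\langle e_a,e_b,e_c\rangle$. Imposing the two linear equations \eqref{defhy} singles out a finite list of such coordinate lines contained in $X$; in particular $\bH_1(X)^{\CC^*}$ is a finite set.

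Next I would determine the induced action on the vertex conic. From the equation \eqref{vconic} the conic $C_v$ lies in the plane $\{a_2=a_3=0\}$ and may be parametrized by $[s:t]\mapsto[s^2:st:0:0:-t^2]$; using the weights of $e_0,\dots,e_4$ the $\CC^*$-action reads $\lambda\cdot[s:t]=[\lambda s:\lambda^{-1}t]$. Hence $C_v^{\CC^*}=\{[e_0],[e_4]\}$ consists of exactly two points.

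The core of the argument is then to push this count through the fibration. By \cite[Corollary 3.2]{Chu23} together with Proposition \ref{linespace}, the locus $\cN$ of non-free lines is a $\CC^*$-invariant $\PP^1$-fibration $\cN\to C_v$ sitting inside the exceptional divisor of $p$, so that every non-free line has its vertex on $C_v$. Therefore $\cN^{\CC^*}$ maps into $C_v^{\CC^*}=\{[e_0],[e_4]\}$ and is contained in the two fibers over these points. Each such fiber is a $\CC^*$-invariant $\PP^1$; because $\cN^{\CC^*}\subseteq\bH_1(X)^{\CC^*}$ is finite while every nonempty projective $\CC^*$-variety has a fixed point, the action on each fiber is nontrivial and hence has exactly two fixed points. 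Consequently $\cN^{\CC^*}$ consists of exactly $2+2=4$ lines.

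Finally, to fill in Table \ref{table_1} I would locate these four lines among the coordinate lines of the first step whose vertex is $\langle e_0\rangle$ or $\langle e_4\rangle$, separating free from non-free by computing the $\CC^*$-equivariant normal bundle from the sequence $0\to N_{L/X}\to N_{L/\Gr(2,V_5)}\to\cO_L(1)^{\oplus2}\to0$ (the quotient term reflecting that $X$ is cut out by the two hyperplanes \eqref{defhy}) and reading off the splitting type in \eqref{norinx}. The hard part will be precisely this last step: while the number four is forced by the fibration and the two fixed points of $C_v$, deciding which candidate lines over $[e_0]$ and $[e_4]$ actually acquire the negative summand $\cO_L(-1)$ requires a careful equivariant computation of the splitting types, and it is this data that the entries of Table \ref{table_1} record.
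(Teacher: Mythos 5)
Your counting argument is correct and takes a genuinely different route from the paper's. The paper proceeds by brute enumeration: it lists all $\CC^*$-invariant lines of $\Gr(2,V_5)$ (the coordinate-flag lines $\{e_i\wedge(ue_s+ve_t)\}$, via Lemma \ref{flocp} applied to the vertex map and to its fibers), intersects this list with the two hyperplanes \eqref{defhy}, and then reads off which survivors are non-free from the explicit classification in the proof of \cite[Corollary 3.2]{Chu23}. You instead use only the structural statement of that corollary---that the non-free locus $\cN$ is a $\CC^*$-invariant $\PP^1$-fibration over the vertex conic $C_v$---together with equivariance of $p$: the induced action on $C_v$ reads $\lambda\cdot[s:t]=[\lambda s:\lambda^{-1}t]$ in your parametrization, so $C_v^{\CC^*}=\{[e_0],[e_4]\}$; the fibers of $\cN$ over these two points are invariant $\PP^1$'s on which the action must be nontrivial (otherwise $\bH_1(X)^{\CC^*}$ would be infinite, contradicting the coordinate-flag description), hence each carries exactly two fixed points. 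All the individual steps here check out, and what this buys is conceptual clarity: the number $4=2\times 2$ is forced by equivariance alone, with no equation-plugging or normal-bundle computation, and it explains the symmetric distribution of the four lines over the two fixed vertices that is visible in Table \ref{table_1}.

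What your proposal does not yet deliver is the content of Table \ref{table_1} itself, and you correctly flag this as the unfinished hard part. Your count shows that exactly two of the three invariant lines of $X$ with vertex $[e_0]$ (namely $\{e_0\wedge(ue_1+ve_2)\}$, $\{e_0\wedge(ue_1+ve_4)\}$ and $\{e_0\wedge(ue_2+ve_4)\}$) are non-free, and likewise over $[e_4]$, but it cannot decide which two; the equivariant splitting-type computation you propose via $0\to N_{L/X}\to N_{L/\Gr(2,V_5)}\to\cO_L(1)^{\oplus 2}\to 0$ would settle this, and is consistent with \eqref{norinx}, but it is left undone. Note that you could close this gap without that computation by invoking the same explicit description of $\cN$ from the proof of \cite[Corollary 3.2]{Chu23} that the paper uses, which identifies exactly which lines through a given point of $C_v$ are non-free. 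As written, your argument proves ``exactly four'' but not ``namely those of Table \ref{table_1}.''
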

\begin{proof}
By Lemma \ref{flocp}, the fixed locus of $\Gr(1, V_5)$ is just the coordinate points since the weights are different to each other. For each point $\PP(e_i)$, $0\leq i \leq 4$, the fixed point of the fiber $p^{-1}(\PP(e_i))$ in \eqref{blmap} determines the fixed loci of $\bH_1(X)$. But the fiber of the map $p$ is isomorphic to $\Gr(2, V_5/\langle e_i\rangle)$. Hence by Lemma \ref{flocp} again, the $\CC^*$-fixed lines in $\Gr(2, V_5)$ are of the form
\[
\{e_i\wedge(ue_s+ ve_t)\}, [u:v]\in \PP^1
\]
for $s, t\in \{0,1,2,3,4\}\setminus \{i\}$. Plugging above lines in the defining relation \eqref{defhy} of del Pezzo $4$-fold $X$ and then using the result in proof of \cite[Corollary 3.2]{Chu23}, we have non-free lines as listed in Table \ref{table_1}.
Note that in Table \ref{table_1}, (a) (resp. (c)) is symmetric to (b) (resp. (d)) with respect to the permutation $(04)(23)$.
\end{proof}

\begin{table}[h]
   \setlength{\tabcolsep}{4pt}
   \centering
   \begin{tabular}{|c||c|c|} 
    \hline
    \rule{0pt}{2.6ex} 
    \emph{Name} & Non-free line &   \\ [0.5ex] 
    \hline
    &  &   \\ [-1.5ex]
    \emph{(a)} & $\{e_0\wedge(ue_1+ ve_2)\}$ & $L_{1}$ \\[1.5ex]
    \hline
    \emph{(b)} & $\{e_4\wedge(ue_1+ ve_3)\}$ &  $L_{0}$ \\[1.5ex]
    \hline
    \emph{(c)} & $\{e_0\wedge(ue_1+ ve_4)\}$ &  $L_{-1}$ \\[1.5ex]
    \hline
    \emph{(d)} &$\{e_4\wedge(ue_0+ ve_1)\}$ &  $L_{-2}$ \\[1.5ex]
    \hline
 \end{tabular}  
   \caption{$\mathbb{C}^{\ast}$-fixed non-free lines parameterized by $[u:v]\in \PP^1$}
   \label{table_1}
 \end{table}
 
\begin{remark}
There exist five $\CC^*$-fixed, free lines in $X$ as follows: $\{e_0\wedge(ue_2+ve_4)\}$, $\{e_4\wedge(ue_0+ve_3)\}$, $\{e_1\wedge(ue_0+ve_4)\}$, $\{e_2\wedge(ue_0+ve_3)\}$, $\{e_3\wedge(ue_2+ve_4)\}$ for $[u:v]\in \PP^1$.
\end{remark}

The moduli space of planes in $X$ and the positional relation of planes were described in \cite[(10.2)]{Fuj81} and \cite[Proposition 2.2]{Pro94}. Note that the homology group $\rH_4(X, \ZZ)$ is freely generated by the Schubert classes $\sigma_{3,1}$ and $\sigma_{2,2}$. Under the choice of two hyperplanes in \eqref{defhy}, we have 
\begin{proposition} [\protect{\cite[Lemma 6.3]{CHL18}}]\label{planespace}
Each point $t\in C_{v}(\cong\PP^1)$ parameterizes the $\sigma_{3,1}$-type planes $P_t$ such that the vertex of the plane $P_t$ is the point $\{t\}$ in the vertex conic $C_{v}$ in \eqref{vconic}. In fact, the planes $P_t$ in $X$ are $P_t=\PP(V_1^t\wedge V_4^t)$ where $V_1^t=\text{span}\{e_0+te_1-t^2e_4\}$ and $V_4^t=\text{span}\{e_0,e_1,e_2+te_3,e_4\}$. Also the unique $\sigma_{2,2}$-type plane $S$ is given by $S=\PP(\wedge^2 \text{span}\{e_0,e_1,e_4\})$.
\end{proposition}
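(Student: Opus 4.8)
The plan is to reduce everything to linear algebra on $V_5$ by encoding the two Pl\"ucker hyperplanes as alternating two-forms. Under the identification $(\wedge^2 V_5)^*\cong\wedge^2 V_5^*$ sending $p_{ij}$ to $e_i^*\wedge e_j^*$, the defining equations \eqref{defhy} become $\omega_1=e_1^*\wedge e_2^*-e_0^*\wedge e_3^*$ and $\omega_{-1}=e_1^*\wedge e_3^*-e_2^*\wedge e_4^*$, so that $X=\{[W]\in\Gr(2,V_5):\omega_1|_W=\omega_{-1}|_W=0\}$. I would then recall the standard classification of the $\PP^2$'s contained in $\Gr(2,V_5)$: the $\sigma_{3,1}$-type planes $\PP(V_1\wedge V_4)=\{[W]:V_1\subset W\subset V_4\}$ attached to a flag $V_1\subset V_4$, with vertex $\PP(V_1)\in\Gr(1,V_5)$, and the $\sigma_{2,2}$-type planes $\PP(\wedge^2 V_3)=\Gr(2,V_3)$ attached to a three-dimensional $V_3$. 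It then suffices to determine which of these lie in $X$.

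For the $\sigma_{3,1}$-type, fix $v$ spanning $V_1$. Since $\PP(V_1\wedge V_4)$ already lies in $\Gr(2,V_5)$, the only condition is $\PP(V_1\wedge V_4)\subset H_1\cap H_{-1}$, i.e. $\omega_1(v,w)=\omega_{-1}(v,w)=0$ for all $w\in V_4$; equivalently $V_4\subseteq\ker(\iota_v\omega_1)\cap\ker(\iota_v\omega_{-1})$. As $V_4$ is four-dimensional inside $V_5$, such a $V_4$ exists if and only if the covectors $\iota_v\omega_1$ and $\iota_v\omega_{-1}$ are linearly dependent, and then $V_4$ is uniquely the common hyperplane they cut out; the degenerate case $\iota_v\omega_1=\iota_v\omega_{-1}=0$ is impossible, since it would force $v$ into both radicals $\langle e_4\rangle$ and $\langle e_0\rangle$. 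Writing $v=\sum x_i e_i$ and computing $\iota_v\omega_1=x_3 e_0^*-x_2 e_1^*+x_1 e_2^*-x_0 e_3^*$ and $\iota_v\omega_{-1}=-x_3 e_1^*+x_4 e_2^*+x_1 e_3^*-x_2 e_4^*$, the vanishing of the $2\times 2$ minors of this pair collapses to the system $x_2=x_3=0$, $x_1^2+x_0 x_4=0$, which is exactly the vertex conic $C_v$ of \eqref{vconic}. Parametrizing $C_v$ by $v=e_0+t e_1-t^2 e_4$ gives $\iota_v\omega_1=t e_2^*-e_3^*$, whose kernel is the common hyperplane $V_4^t=\text{span}\{e_0,e_1,e_2+t e_3,e_4\}$, recovering $P_t=\PP(V_1^t\wedge V_4^t)$ with vertex $\PP(V_1^t)$ equal to the point $t$ of $C_v$, as claimed.

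For the $\sigma_{2,2}$-type, the condition $\PP(\wedge^2 V_3)\subset X$ says precisely that $V_3$ is isotropic for both $\omega_1$ and $\omega_{-1}$. Here I would exploit that each form has rank $4$, with radical $\langle e_4\rangle$ for $\omega_1$ and $\langle e_0\rangle$ for $\omega_{-1}$. A three-dimensional isotropic subspace of a rank-$4$ form on a five-dimensional space maps to an isotropic subspace of the four-dimensional symplectic quotient, which has dimension at most $2$; hence it meets the radical, so any such $V_3$ contains $e_4$ (for $\omega_1$) and $e_0$ (for $\omega_{-1}$). Thus $V_3\supseteq\langle e_0,e_4\rangle$, and writing $V_3=\langle e_0,e_4,u\rangle$ the remaining conditions $\omega_1(e_0,u)=-u_3=0$ and $\omega_{-1}(e_4,u)=u_2=0$ force $u\in\langle e_0,e_1,e_4\rangle$, so $V_3=\text{span}\{e_0,e_1,e_4\}$ is the unique solution and $S=\PP(\wedge^2\text{span}\{e_0,e_1,e_4\})$.

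I expect the main obstacle to be bookkeeping rather than conceptual: correctly converting the Pl\"ucker equations into $\omega_{\pm 1}$ with the right signs, and carrying out the minor computation so that it lands cleanly on the conic \eqref{vconic} and on the explicit $V_4^t$. The $\sigma_{2,2}$ uniqueness is the cleanest step once the rank-$4$/radical observation is in place; the only point to be careful about there is to invoke the radical constraint for \emph{both} forms simultaneously, so as to pin down $\langle e_0,e_4\rangle\subset V_3$ before solving for the last generator.
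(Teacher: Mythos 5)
Your proposal is correct, and it is worth noting that the paper itself gives no proof of this statement at all: Proposition \ref{planespace} is imported wholesale from \cite[Lemma 6.3]{CHL18} (with the underlying classification of planes going back to \cite[(10.2)]{Fuj81} and \cite[Proposition 2.2]{Pro94}). So your argument is necessarily a different route --- a self-contained verification that the paper omits. I checked the computations: the translation of \eqref{defhy} into the forms $\omega_1=e_1^*\wedge e_2^*-e_0^*\wedge e_3^*$, $\omega_{-1}=e_1^*\wedge e_3^*-e_2^*\wedge e_4^*$ is right; the contractions $\iota_v\omega_1=x_3e_0^*-x_2e_1^*+x_1e_2^*-x_0e_3^*$ and $\iota_v\omega_{-1}=-x_3e_1^*+x_4e_2^*+x_1e_3^*-x_2e_4^*$ are right; and the ten $2\times 2$ minors do collapse set-theoretically to $x_2=x_3=0,\ x_1^2+x_0x_4=0$ (the minors on column pairs $(0,1)$ and $(1,4)$ give $x_3^2$ and $x_2^2$, after which only $x_1^2+x_0x_4$ survives), which is exactly $C_v$ in \eqref{vconic}. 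The $\sigma_{2,2}$ argument via the radicals $\langle e_4\rangle$, $\langle e_0\rangle$ of the two rank-$4$ forms, forcing $\langle e_0,e_4\rangle\subset V_3$ and then $u_2=u_3=0$, is clean and correct, and correctly uses that $\Gr(2,V_3)\subset X$ is equivalent to $V_3$ being isotropic for both forms. Two cosmetic points you may wish to flag: you quote the classification of planes in $\Gr(2,V_5)$ into $\sigma_{3,1}$- and $\sigma_{2,2}$-types as standard (legitimate, but it is the one external input of your proof), and your explicit parametrization $v=e_0+te_1-t^2e_4$ covers only the affine chart of $C_v$, missing $v=e_4$; this matches the statement's own parametrization, and your dependence-of-covectors argument is projective, so the bijection between points of $C_v$ and $\sigma_{3,1}$-planes in $X$ is established at every point, including $t=\infty$. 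What your approach buys is a verifiable, purely linear-algebraic proof inside this paper's coordinate setup (the same $\omega_{\pm1}$ and charts reappear in Sections 2--3); what the citation buys the authors is brevity and consistency with the references where these planes were first classified.
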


\subsection{Conics in $X$ via a Grassmannian bundle}
Let $\cU$ be the universal subbundle over the Grassmannian $\mathrm{Gr}(4,V)$. Set $\rG=\mathrm{Gr}(2,V)$.
Let $\bS(\mathrm{G})=\Gr(3,\wedge^2\cU)$ be the Grassmannian bundle over $\mathrm{Gr}(4,V)$. The space $\bS(\rG)$ is an incidence variety of pairs 
\[\bS(\rG)=\{(U,V_4)\,|\,U\subset \wedge^2 V_4\}\subset \mathrm{Gr}(3,\wedge^2 V)\times \mathrm{Gr}(4, V).\]
The correspondence
\[
(U,V_4)\mapsto \PP(U)\cap \Gr(2,V_4)
\]
between $\bS(\rG)$ and $\bH_2(\rG)$ provides an birational map 
\begin{equation}\label{birG}
\Phi: \bS(\rG)\dashrightarrow \bH_2(\rG).
\end{equation}
Let us denote by $T(\rG)$ the undefined locus of the map $\Phi$. Then the space $T(\rG)$ is isomorphic to the disjoint union of flag varieties
\begin{equation}\label{undefG}
\Gr(1,4,5)\sqcup \Gr(3,4,5)\cong T(\rG)\subset \bS(\rG),
\end{equation}
where the embedding map is defined as follows:
\begin{itemize}
\item a pair $(V_1,V_4)\in \Gr(1,4,5)$ is sent to $(V_1\wedge V_4, V_4)\in T(\rG)$ and 
\item a pair $(V_3,V_4)\in \Gr(3,4,5)$ is sent to $ (\wedge^2V_3, V_4)\in T(\rG)$.
\end{itemize}
Abstractly, the locus $T(G)$ is related with the Fano scheme of planes in $\rG$. By a restriction of the map $\Phi$ in \eqref{birG}, we have a birational model of Hilbert scheme $\bH_2(X)$ as follows. 
Let
\[\cK= \mathrm{ker}\{\wedge^2\cU\subset \wedge^2 \cO^{\oplus 5}\to \cO^{\oplus 2} \}\] be the kernel of the composition map where the arrow is given by $\{p_{12}-p_{03},p_{13}-p_{24}\}$. The sheaf $\cK$ is locally free over $\Gr(4,5)$ with $\text{rank} \,\cK=4$. Let $\bS(X)=\Gr(3, \cK)$ be the relative Grassmannian over $\Gr(4,5)$. Let $T(X)= \bS(X)\cap T(\rG)$ be the (scheme theoretic) intersection of $\bS(X)$ and $T(\rG)$ in $\bS(\rG)$. The space $\bS(X)$ is the incident variety
\begin{equation}\label{incident}
\bS(X)=\{(U_3, V_4) \mid U_3\subset \cK_{[V_4]}\}\subset \bS(\rG).
\end{equation}
By restricting the map $\Phi$ in \eqref{birG} to the space $\bS(X)$, we have a birational correspondence 
\begin{equation}\label{resmap1}
\Phi:\bS(X)\dashrightarrow  \bH_2(X), \;\Phi([(U_3, V_4)])=\PP(U_3)\cap \Gr(2,V_4).
\end{equation}
Here we use the same notation of the restriction map as the original map $\Phi$. Note that the inverse map $\Phi^{-1}$ corresponds a conic $C$ to the pair of its linear spanning $\langle C\rangle=\PP^2$ and the linear spanning of the rational normal scroll of type $(1,1)$ or $(0, 2)$.

On the other hand, the undefined locus $T(X)$ of the map $\Phi$ in \eqref{resmap1} is isomorphic to a disjoint union $\PP^1\sqcup \PP^1$, which parameterizes two distinct points $[P_t]$ and $[S]$ over a linear subspace $(t\in) \PP^1\subset\Gr(4,5)$ (\cite[Remark 6.8]{CHL18}). By blowing-up the space $\bS(X)$ along the undefined locus $T(X)$, one can extend the rational map $\Phi$ to a regular morphism.
\begin{proposition}[\protect{\cite[Proposition 6.7 and Remark 6.8]{CHL18}}]\label{bidiagconic}
Under above definition and notation, The Hilbert scheme $\bH_2(X)$ is obtained from $\bS(X)$ by birational maps:
\begin{equation*}
\xymatrix{
\widetilde{\bS}(X)\ar[d]\ar[rd]^{\widetilde{\Phi}}&\\
\Phi:\bS(X)\ar@{-->}[r]& \bH_2(X),
}
\end{equation*}
where the space $\widetilde{\bS}(X)$ is a relative conics space over $\Gr(4,5)$ such the fiber over $\Gr(4,5)$ is the Hilbert scheme $\bH_2(\Gr(2,4)\cap H_1\cap H_{-1})$ of conics in a quadric surface $\Gr(2,4)\cap H_1\cap H_2$.
\end{proposition}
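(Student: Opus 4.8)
The plan is to construct $\widetilde{\bS}(X)$ as a relative Hilbert scheme of conics, so that the fiber description is built in, and then to exhibit both legs of the diagram as natural morphisms. Over $\Gr(4,V_5)$ let $\cU$ be the universal rank-$4$ subbundle and form the family of quadric surfaces
\[
\mathcal{Q}=\Gr(2,\cU)\cap H_1\cap H_{-1}\subset\PP(\cK),
\]
whose fiber over $[V_4]$ is $Q_{[V_4]}=\Gr(2,V_4)\cap H_1\cap H_{-1}$, the intersection of the Klein quadric $\Gr(2,V_4)\subset\PP(\wedge^2 V_4)$ with the two hyperplanes, sitting in $\PP(\cK_{[V_4]})\cong\PP^3$. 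I would define $\widetilde{\bS}(X)$ to be the component of the relative Hilbert scheme $\Hilb^{2m+1}(\mathcal{Q}/\Gr(4,5))$ dominating $\Gr(4,5)$, so that its fiber over $[V_4]$ is (a component of) $\bH_2(Q_{[V_4]})$ by construction. The first observation to record is that for generic $[V_4]$ the surface $Q_{[V_4]}$ is a smooth quadric, and that every conic in a smooth quadric surface is a hyperplane section; hence the generic fiber is the dual $\PP^3\cong\Gr(3,\cK_{[V_4]})$, matching the generic fiber of $\bS(X)=\Gr(3,\cK)$.

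Next I would produce the two morphisms. The universal conic over $\widetilde{\bS}(X)$ is a flat family of curves lying in $X$, since $Q_{[V_4]}\subset \Gr(2,V_4)\cap H_1\cap H_{-1}\subset X$; by the universal property of the Hilbert scheme this induces a morphism $\widetilde{\Phi}\colon\widetilde{\bS}(X)\to\bH_2(X)$ sending $(C,V_4)$ to $C$ regarded as a curve in $X$, which is the desired regular lift of $\Phi$. In the other direction, any conic $C$ spans a unique plane $\langle C\rangle=\PP(U_3)$ with $\dim U_3=3$ and $U_3\subset\cK_{[V_4]}$, so the assignment $(C,V_4)\mapsto(U_3,V_4)$ defines a morphism $\widetilde{\bS}(X)\to\bS(X)$. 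Over the open locus of smooth $Q_{[V_4]}$ this last morphism is an isomorphism whose inverse is exactly $\Phi$, so $\widetilde{\Phi}$ agrees with $\Phi$ there and the diagram commutes away from the exceptional locus.

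It remains to identify the modification $\widetilde{\bS}(X)\to\bS(X)$ with the blow-up along $T(X)$. This map is an isomorphism off $T(X)$, so the question is local over the image $\PP^1\subset\Gr(4,5)$ of $T(X)$. Over a point $t$ of this $\PP^1$ the two hyperplanes force $\PP(\cK_{[V_4]})$ to contain two planes of $\Gr(2,V_4)$, namely $P_t$ and $S$ from Proposition \ref{planespace}, so that $Q_{[V_4]}$ degenerates to the reducible quadric $P_t\cup S$; the planes $P_t$ and $S$ are precisely the two points of $T(X)$ in the fiber $\Gr(3,\cK_{[V_4]})\cong\PP^3$, and they are exactly where $\Phi$ fails to be defined, because there $\PP(U_3)\subset X$. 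I would then compute the flat limits of smooth conics along an approach to each such point, show that they sweep out a $\PP^1$, and conclude that the fiber of $\widetilde{\bS}(X)$ over $t$ is $\PP^3$ blown up at the two points $[P_t]$ and $[S]$. Since $\bS(X)$ is smooth along the smooth centre $T(X)\cong\PP^1\sqcup\PP^1$, the universal property of the blow-up then identifies $\widetilde{\bS}(X)$ with $\Bl_{T(X)}\bS(X)$.

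The main obstacle is this analysis over the degeneration locus. One must control the flat limits of conics as the ambient quadric surface specializes to the pair of planes $P_t\cup S$, isolate the correct three-dimensional component of the reducible scheme $\bH_2(P_t\cup S)$ (which also contains the two five-dimensional families of conics supported in a single plane), and verify that this component is precisely the blow-up of $\PP^3$ at the two distinguished points and nothing larger. Equivalently, one needs the relative Hilbert scheme to remain flat of the expected dimension across this locus and to induce an isomorphism onto the projectivized normal bundle of $T(X)$. Once this local matching is in place, commutativity of the diagram and the birationality of both legs follow at once from the generic identification already established.
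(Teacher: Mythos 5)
The paper does not actually prove this proposition; it is quoted from \cite[Proposition 6.7 and Remark 6.8]{CHL18}, and the surrounding text only records the construction. Measured against that construction, your architecture is the right one and matches it: $\widetilde{\bS}(X)$ is the relative conic space of the family $\Gr(2,\cU)\cap H_1\cap H_{-1}$ over $\Gr(4,5)$, the vertical arrow is the span map $(C,V_4)\mapsto(\langle C\rangle,V_4)$ into $\bS(X)=\Gr(3,\cK)$, the map $\widetilde{\Phi}$ comes from the universal family via the universal property of $\bH_2(X)$, and generically both legs are isomorphisms because a conic in a smooth quadric surface is a plane section. Up to that point your argument is sound (modulo the routine remark that birationality of $\widetilde{\Phi}$ onto $\bH_2(X)$ also needs surjectivity of $\widetilde{\Phi}$, which gives irreducibility of $\bH_2(X)$; this follows because the scroll of any conic in $\Gr(2,V_5)$ spans at most a $4$-dimensional subspace).

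The genuine gap is in the degeneration-locus analysis, which you defer and which is wrong where it is specific. Over a point $t$ of the distinguished $\PP^1\subset\Gr(4,5)$ the quadric is $P_t\cup S$, and the flat limits of plane sections of nearby smooth quadrics whose planes tend to $P_t$ are cut on $P_t$ by quadrics (locally of the form $hg+q$ with $h$ the direction of approach); these sweep out the \emph{entire} $\PP^5=\bH_2(P_t)$ of conics inside $P_t$, not a $\PP^1$ as you claim. Consequently the fiber of $\widetilde{\bS}(X)\to\Gr(4,5)$ over $t$ is the full, reducible, $5$-dimensional scheme $\bH_2(P_t\cup S)$ --- two $\PP^5$'s (conics lying in a single plane) glued to the $3$-dimensional family of broken conics --- and \emph{not} ``$\PP^3$ blown up at the two points'': that blown-up $\PP^3$ is only the strict transform of the fiber $\Gr(3,\cK_{[V_4^t]})$ of $\bS(X)$. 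Indeed the proposition's own statement, that the fiber is $\bH_2(\Gr(2,4)\cap H_1\cap H_{-1})$, already contradicts your fiber description. For the same reason your proposed verification that ``the relative Hilbert scheme remains flat of the expected dimension across this locus'' cannot succeed: the fiber dimension jumps from $3$ to $5$, so the relative conic space is not flat over $\Gr(4,5)$, and flatness over the base is not what the blow-up identification needs. What is actually needed, and what constitutes the real content of \cite[Proposition 6.7]{CHL18}, is (i) that the scheme-theoretic preimage of $T(X)$ under the span map is a Cartier divisor, and (ii) that the span-map fiber over each point of $T(X)$ is the whole $\PP^5$ of conics in the corresponding plane --- equivalently, that every conic contained in $P_t$ or in $S$ deforms into the nearby smooth quadric fibers, so that $\widetilde{\bS}(X)$ is irreducible and these $\PP^5$'s are exactly the exceptional fibers $\PP(N_{T(X)/\bS(X)})$ of $\Bl_{T(X)}\bS(X)$. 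This smoothability statement appears nowhere in your plan, and without it neither the blow-up identification nor the commutativity and birationality claims are established over the locus where $\Phi$ is undefined.
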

Let $\Gr(2, \cU)$ be the relative Grassimannian over $\mathrm{Gr}(4, V)$. From the universal  sequence $\ses{\cU}{\cO\otimes V_5}{\cQ}$, we have a natural projection map $\Gr(2, \cU)\lr \Gr(2, V_5)$. Hence there is the structure morphism $\pi: \bH_2(\Gr(2, \cU))\lr \bH_2(\Gr(2, V))$ where $\bH_2(\Gr(2, \cU))$ is the relative Hilbert scheme of conics. The extended morphism $\widetilde{\Phi}$ in Proposition \ref{bidiagconic} is nothing but the restriction of the structure map $\pi$ along the embedding $\bH_2(X)\subset \bH_2(\Gr(2, V))$.

A planar double line $D$ in $X$ supported on $L$ is classified by $\Ext_X^1(\cO_L, \cO_L(-1))\cong \rH^0(N_{L/X}(-1))$ up to scalar multiplication. The double line $D$ for a free (resp. non-free) line $L$ in $X$ is unique (resp. corresponds to a point in $\PP^1$) by the normal bundle type of line in $X$.

\begin{proposition}
There exist exactly two $\CC^*$-fixed double lines (Table \ref{table_2}) supported on each non-free line in Table \ref{table_1}.
\end{proposition}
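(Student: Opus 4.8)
The plan is to reduce the statement to a weight computation on the two-dimensional vector space $H^0(N_{L/X}(-1))$. For a non-free line $L$ in Table \ref{table_1} we have $N_{L/X}\cong \cO_L(-1)\oplus \cO_L(1)^{\oplus 2}$ by \eqref{norinx} and Definition \ref{deffree}, so $N_{L/X}(-1)\cong \cO_L(-2)\oplus \cO_L^{\oplus 2}$ and $H^0(N_{L/X}(-1))\cong \CC^2$; thus the planar double lines supported on $L$ form the projective line $\PP(H^0(N_{L/X}(-1)))\cong \PP^1$. Because each $L$ in Table \ref{table_1} is $\CC^*$-invariant, the action lifts to an equivariant structure on $N_{L/X}$ and hence to a linear $\CC^*$-action on $\CC^2=H^0(N_{L/X}(-1))$. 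The $\CC^*$-fixed double lines are precisely the weight lines of this action, so there are exactly two of them when the two weights are distinct, and the entire $\PP^1$ is fixed otherwise. The proposition therefore reduces to showing that the two weights carried by $H^0(N_{L/X}(-1))$ differ for each of the four lines.

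To pin down these weights I would reconstruct the equivariant bundle $N_{L/X}$ from its fiber weights at the two $\CC^*$-fixed points of $L\cong \PP^1$. Writing $L=\{W_{[u:v]}\}$ with $W_{[u:v]}=\langle e_i,\,ue_s+ve_t\rangle$ as in the proof of Proposition \ref{nofreel}, the two fixed points are the coordinate $2$-planes $W_0,W_\infty$, and at each of them $T\Gr(2,V_5)=\Hom(W,V_5/W)$ decomposes into weight spaces spanned by the vectors $e_p^\ast\otimes e_q$ of weight $\mathrm{wt}(e_q)-\mathrm{wt}(e_p)$. Imposing the linearizations of the two equations in \eqref{defhy} and then quotienting by the tangent direction of $L$ gives the fiber weights of $N_{L/X}$ at $W_0$ and $W_\infty$. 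Matching the two resulting triples against the splitting type $\cO_L(-1)\,\chi_\alpha\oplus \cO_L(1)\,\chi_\beta\oplus \cO_L(1)\,\chi_\gamma$---using that the fiber weight of $\cO_L(d)\,\chi_w$ changes between $W_0$ and $W_\infty$ by a fixed amount determined by the two Pl\"ucker weights of $L$---determines the characters $\alpha,\beta,\gamma$, and the two weights on $H^0(N_{L/X}(-1))$ are then read off from the two $\cO_L(1)$ summands.

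Executing this for $L_1=\{e_0\wedge(ue_1+ve_2)\}$ produces the fiber weights $\{-4,-2,-1\}$ and $\{-3,-3,-2\}$ at $\langle e_0,e_1\rangle$ and $\langle e_0,e_2\rangle$, which force the weights on $H^0(N_{L_1/X}(-1))$ to be $0$ and $1$; in particular they are distinct, so $L_1$ carries exactly two fixed double lines. The symmetry $(04)(23)$ recorded after Proposition \ref{nofreel} intertwines the $\CC^*$-action with $t\mapsto t^{-1}$ and sends $L_1$ to $L_0$ and $L_{-1}$ to $L_{-2}$, so it transports the count verbatim; it therefore suffices to repeat the computation once more for $L_{-1}=\{e_0\wedge(ue_1+ve_4)\}$, where the same procedure yields the distinct weights $-2$ and $3$. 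Finally, each of the two weight lines singles out one normal direction of $L$, and thickening $L$ in that direction gives the explicit planar double line; listing these for the four lines produces Table \ref{table_2}.

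I expect the principal difficulty to be bookkeeping rather than anything conceptual. Since the abstract splitting type $\cO_L(-1)\oplus \cO_L(1)^{\oplus 2}$ does not by itself record the equivariant characters, one must compute the fiber weights at both fixed points carefully enough to be certain that the two weights on $H^0(N_{L/X}(-1))$ are genuinely different---otherwise the fixed locus would be the whole $\PP^1$ rather than two points---and the sign conventions in the linearization of \eqref{defhy} are the most error-prone step. The remaining task, translating each weight vector into an explicit double-line subscheme for Table \ref{table_2}, is then a direct unwinding of the chosen normal direction.
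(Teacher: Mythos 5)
Your proposal is correct, but it follows a genuinely different route from the paper's. The paper's proof imports the geometry of \cite[Corollary 3.2]{Chu23}: each non-free line lies on a one-parameter family of quadric cones, the planar double lines supported on it arise as tangent planes of these cones along the line, and the $\CC^*$-fixed ones are then singled out by direct coordinate computation---which simultaneously produces the explicit ideals recorded in Table \ref{table_2}. You instead stay inside the deformation-theoretic classification already quoted in Section 2 (double lines on a non-free $L$ form $\PP(\rH^0(N_{L/X}(-1)))\cong\PP^1$) and make it equivariant: you compute the fiber weights of $N_{L/X}$ at the two fixed points of $L$ from the linearization of \eqref{defhy}, reconstruct the equivariant splitting, and conclude that the induced action on $\CC^2=\rH^0(N_{L/X}(-1))$ has two distinct weights, hence exactly two fixed points. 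Your numbers check out: for $L_1$ the fiber weight multisets are indeed $\{-4,-2,-1\}$ and $\{-3,-3,-2\}$, the matching with the splitting type $\cO_L(-1)\oplus\cO_L(1)^{\oplus 2}$ is unique, and the resulting weights on $\rH^0$ are $0,1$ (resp.\ $-2,3$ for $L_{-1}$) with the tautological linearization of $\cO_L(-1)$. Your approach buys a structural explanation of why the count is two rather than all of $\PP^1$ (the only alternative, as you note), and it avoids any reliance on the quadric-cone description; the paper's computation, on the other hand, delivers for free the explicit ideals of Table \ref{table_2}, which are needed downstream (Lemmas \ref{resno1} and \ref{resno} and the rationality argument), whereas in your proof these would still have to be extracted by carrying out the final ``thickening'' step. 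Two points worth making explicit if you write this up: (i) identifying fixed double lines with weight lines of $\rH^0(N_{L/X}(-1))$ uses that every nonzero section is an epimorphism $N^*_{L/X}\twoheadrightarrow\cO_L(-1)$, which holds here because $\Hom(\cO_L(1),\cO_L(-1))=0$ forces any such map to factor through the $\cO_L(-1)^{\oplus 2}$ summand of $N^*_{L/X}$; (ii) the permutation $(04)(23)$ preserves $X$ only after inserting suitable signs (the plain permutation carries $H_1$, $H_{-1}$ to $p_{13}+p_{24}=0$, $p_{12}+p_{03}=0$), a point the paper also glosses over and which does not affect the weight bookkeeping.
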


\begin{proof}
Since the proof is similar to each other, we only prove the claim for the case (a) in Table \ref{table_1}. By the proof of \cite[Corollary 3.2]{Chu23}, the non-free line is supported on one parameter family of quadric cones defined by
\[
\langle up_{04}^2+(up_{01}- p_{02})p_{14}, p_{12}-p_{03}, p_{12}-up_{04}, p_{13}-p_{24}, p_{23}-up_{24}, p_{24}-up_{14}, p_{34}\rangle
\]
for $u\in \CC^*$ in $\PP(\wedge^2V)=\PP^9$. By computing one parameter family of tangent planes of the quadric cone along the line, the $\CC^*$-fixed double lines are given as in Table \ref{table_2}.
\end{proof}

\begin{table}[h]
   \setlength{\tabcolsep}{4pt}
   \centering
   \begin{tabular}{|c||c|c|} 
    \hline
    \rule{0pt}{2.6ex} 
    \emph{Name} & Ideal $I_{D/\PP^9}$ & $D_{\text{red}}$  \\ [0.5ex] 
    \hline
    &  &   \\ [-1.5ex]
    \emph{(A-1) or (A-2)} & $\langle p_{04}^2, p_{03}, p_{12}, p_{13}, p_{14}, p_{23}, p_{24}, p_{34}\rangle$ or $\langle p_{12}^2, p_{04}, p_{12}-p_{03}, p_{13}, p_{14}, p_{23}, p_{24}, p_{34}\rangle$ & (a) \\[1.5ex]
    \hline
    \emph{(B-1) or (B-2)} & Permutation $(04)(23)$ of (A-1) and (A-2) respectively &  (b) \\[1.5ex]
    \hline
    \emph{(C-1) or (C-2)} & $\langle p_{02}^2, p_{03}, p_{12}, p_{13}, p_{14}, p_{23}, p_{24}, p_{34}\rangle$ or $\langle p_{14}^2, p_{02}, p_{03}, p_{12}, p_{13}, p_{23}, p_{24}, p_{34}\rangle$ &  (c) \\[1.5ex]
    \hline
    \emph{(D-1) or (D-2)} &Permutation $(04)(23)$ of (C-1) and (C-2) respectively&  (d) \\[1.5ex]
    \hline
 \end{tabular} 
   \caption{$\mathbb{C}^{\ast}$-fixed double lines supported on a non-free line}
   \label{table_2}
 \end{table} 
\subsection{Normal bundles of broken conics}
Let us recall the result of \cite[Section 1]{BF86} and \cite[Appendix 2]{KPS18} related with the normal bundle of a broken conic.
Let $C=L_1\cup L_2$ be a pair of lines in $Y$ with the intersection point $L_1\cap L_2=\{p\}$. From the exact sequence $\ses{I_C}{I_{L_{i}}}{I_{L_{i}}/I_C\cong \cO_{L_{3-i}}(-1)}$, we have an exact sequence
\begin{equation}\label{rere1}
\ses{N_{L_i/Y}}{N_{C/Y}|_{L_i}}{\CC_p}
\end{equation}
for $i=1, 2$ (\cite[Lemma A.2.1]{KPS18}). Also, from the structure sequence $\ses{\cO_{L_{i}}(-1)}{\cO_C}{\cO_{L_{3-i}}}$ and local freeness of $N_{C/Y}$, there is a short exact sequence
\begin{equation}\label{reno2}
0\lr N_{C/Y}|_{L_{i}}(-1) \lr N_{C/Y}\lr N_{C/Y}|_{L_{3-i}}\lr0.
\end{equation}

On the other hand, the general construction (so called, \emph{the Ferrand doubling}) of the (planar) double line in a smooth projective variety $Y$ was established in \cite[Section 1]{BF86}. Furthermore, the normal bundle sequence of a double line was studied in \cite[Lemma A.2.4]{KPS18}. A planar double line $D$ (i.e., $\chi(\cO_{D}(m))=2m+1$) supported on a line $L$ in $Y$ is determined by the kernel of an epimorphism
\begin{equation}\label{epi}
\phi: N_{L/Y}^*=I_L/I_L^2 \twoheadrightarrow \cO_L(-1)
\end{equation}
such that the kernel of $\phi$ is $\ker(\phi)=I_D/I_L^2$ for some ideal sheaf $I_D\subset \cO_Y$, $I_L^2\subset I_D\subset I_L$. Furthermore the kernel $I_D/I_L^2$ of the map $\phi$ in \eqref{epi} has a locally free resolution
\begin{equation}\label{lore}
\ses{\cO_L(-2)}{N_{D/Y}^*|_L}{I_D/I_L^2}
\end{equation}
where $\cO_L(-2)\cong I_L^2/I_DI_L$ and $N_{D/X}^*|_L\cong I_D/I_D^2\otimes \cO_L\cong I_D/I_DI_L$. Taking the dual functor $\cH om_L (-, \cO_L)$ into two exact sequences \eqref{epi} and \eqref{lore}, we have an exact sequence
\begin{equation}\label{dounprm}
0\lr \cO_L(1)\lr N_{L/Y}\lr N_{D/Y}|_L\lr \cO_L(2)\lr 0.
\end{equation}
On the other hand, by tensoring the normal bundle $N_{D/Y}$ into the structure sequence $\ses{\cO_L(-1)}{\cO_D}{\cO_L}$, we have
\begin{equation}\label{struse}
\ses{N_{D/Y}|_L(-1)}{N_{D/Y}}{N_{D/Y}|_L}.
\end{equation}

From \eqref{rere1} and \eqref{reno2} (resp. \eqref{dounprm} and \eqref{struse}), one can provide a sufficient condition for the smoothness of the Hilbert scheme of conics at the point corresponding to a pair of lines (resp. a double line). The proof of the following proposition can be found in \cite{KP25+} when the variety $Y$ is homogeneous. For the convenience of readers, we put a detail here.
\begin{proposition}[\protect{\cite{KP25+}}]\label{consm}
Let $Y$ be a smooth \emph{convex} variety (that is, for any morphism $f: \PP^1\lr Y$, we have $\rH^1(f^*T_Y)=0$) with a fixed embedding in a projective space $\PP^r$, then the Hilbert scheme $\bH_2(Y)$ of conics is a smooth variety.
\end{proposition}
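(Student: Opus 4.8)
The plan is to verify the infinitesimal criterion for smoothness at every closed point of $\bH_2(Y)$: since the Zariski tangent space at a conic $[C]$ is $\rH^0(C,N_{C/Y})$ and the obstructions lie in $\rH^1(C,N_{C/Y})$, it suffices to prove $\rH^1(C,N_{C/Y})=0$ for \emph{every} conic $C\subset Y$. As $C$ is a locally complete intersection curve, the normal sheaf $N_{C/Y}$ is locally free, so the restriction sequences \eqref{rere1}, \eqref{reno2}, \eqref{dounprm} and \eqref{struse} all apply. A conic is of exactly one of three types --- a smooth conic, a union $L_1\cup L_2$ of two distinct lines meeting at a point, or a planar double line supported on a line --- and I would treat the three cases separately, in each reducing the desired vanishing to cohomology of line bundles on $\PP^1$.

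The one genuine input from convexity is that \emph{every line in $Y$ is free}, i.e. $N_{L/Y}\cong\bigoplus_j \cO_L(a_j)$ with all $a_j\geq 0$. To prove this I would take a degree-two cover $f\colon\PP^1\to L\subset Y$; splitting $0\to T_L\to T_Y|_L\to N_{L/Y}\to 0$ on $L\cong\PP^1$ and pulling back gives $f^*T_Y\cong\cO(4)\oplus\bigoplus_j\cO(2a_j)$, and convexity forces $\rH^1(f^*T_Y)=0$, whence $2a_j\geq -1$ and so $a_j\geq 0$. In particular $\rH^1(N_{L/Y})=\rH^1(N_{L/Y}(-1))=0$ for every line $L$. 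It is precisely this consequence --- which needs covers of degree $\geq 2$, not merely the embedding of degree one --- that fails for the non-free lines of the quintic del Pezzo fourfold, explaining why those require the separate analysis behind Theorem \ref{mainthm}.

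For a smooth conic $C\cong\PP^1$ the embedding is a morphism $f\colon\PP^1\to Y$ with $T_Y|_C=f^*T_Y$, so convexity gives $\rH^1(T_Y|_C)=0$ directly; then from $0\to T_C\to T_Y|_C\to N_{C/Y}\to 0$ with $\rH^1(T_C)=\rH^1(\cO_{\PP^1}(2))=0$ and $\rH^2=0$ on a curve, one reads off $\rH^1(N_{C/Y})=0$. For a broken conic $C=L_1\cup L_2$, freeness of $L_2$ and \eqref{rere1} give $\rH^1(N_{C/Y}|_{L_2})=0$, while twisting \eqref{rere1} for $L_1$ by $\cO_{L_1}(-1)$ (the skyscraper term is unchanged) exhibits $\rH^1(N_{C/Y}|_{L_1}(-1))$ as a quotient of $\rH^1(N_{L_1/Y}(-1))=0$; feeding both vanishings into \eqref{reno2} with $i=1$ yields $\rH^1(N_{C/Y})=0$. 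For a double line $D$ on $L$, I would split the four-term sequence \eqref{dounprm} into $0\to\cO_L(1)\to N_{L/Y}\to Q\to 0$ and $0\to Q\to N_{D/Y}|_L\to\cO_L(2)\to 0$ with $Q=\coker(\cO_L(1)\to N_{L/Y})$; chasing cohomology through $\rH^1(N_{L/Y})=0$, and doing the same for the $\cO_L(-1)$-twist of \eqref{dounprm} using $\rH^1(N_{L/Y}(-1))=0$, gives $\rH^1(N_{D/Y}|_L)=\rH^1(N_{D/Y}|_L(-1))=0$, whence \eqref{struse} forces $\rH^1(N_{D/Y})=0$.

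With $\rH^1(N_{C/Y})=0$ in all three cases, $\bH_2(Y)$ is smooth at every point and hence a smooth variety; moreover $\dim_{[C]}\bH_2(Y)=h^0(N_{C/Y})=\chi(N_{C/Y})$, the same integer for all three types since $\deg N_{C/Y}$ is fixed by the class of a conic. I expect the double-line case to be the main obstacle: unlike the other two it rests on the four-term Ferrand-doubling sequence \eqref{dounprm}, so the bookkeeping of its two splittings and their interaction with the $\cO_L(-1)$-twist must be carried out carefully, even though each individual step only invokes the vanishing of $\rH^1$ for line bundles of degree $\geq -1$ on $\PP^1$.
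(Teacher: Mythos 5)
Your proposal is correct and takes essentially the same route as the paper's own proof: convexity applied to a degree-two cover of each line gives $\rH^1(N_{L/Y})=\rH^1(N_{L/Y}(-1))=0$ (every line is free), the smooth case follows directly from convexity of the embedding, and the sequences \eqref{rere1}, \eqref{reno2}, \eqref{dounprm}, \eqref{struse} then reduce the broken and double-line cases to those vanishings, exactly as in the paper. The only cosmetic difference is that the paper obtains the line vanishings via the projection formula $f_*\cO_{\PP^1}\cong \cO_L\oplus\cO_L(-1)$ rather than by pulling back the normal bundle sequence, and it leaves implicit the cohomology chases (splitting of the four-term sequence, the twist by $\cO_L(-1)$) that you write out in detail.
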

\begin{proof}
Let $C$ be a smooth conic in $Y$. Let us choose the map $f:\PP^1\cong C$ as an embedding map. By the assumption, $\rH^1(T_Y|_C)\cong \rH^1(N_{C/Y})=0$. On the other hand, for each line $L\subset Y$, let us choose a two-to-one covering map $f: \PP^1\lr L$. Then $f_*\cO_{\PP^1}\cong\cO_L\oplus \cO_L(-1)$. Therefore, by adjunction formula and the tangent bundle sequence $\ses{T_L}{T_Y|_{L}}{N_{L/Y}}$,
\[
0=\rH^1(f^*T_Y)\cong \rH^1(T_Y\otimes f_*\cO_{\PP^1})\cong\rH^1(T_Y|_{L}\oplus T_Y|_{L}(-1))\cong\rH^1(N_{L/Y})\oplus \rH^1(N_{L/Y}(-1)).
\]
Hence one can read that the degrees of each component of the normal bundle $N_{L/Y}$ are not negative. Hence for any broken (possibly non-reduced) conic $C$ containing the line $L$, $h^1(N_{C/Y}|_{L})=h^1(N_{C/Y}|_{L}(-1))=0$ by \eqref{rere1} and \eqref{dounprm}. This implies that $h^1(N_{C/Y})=0$ by \eqref{reno2} and \eqref{struse}. Then we conclude by applying \cite[(2.12.1) and (2.14.2), Ch. I]{Kol96}.
\end{proof}
Since our variety $X$ is not convex, we can not apply Proposition \ref{consm}. But we can conclude the same result by a direct computation of the restricted bundle and a connecting map of the $\CC^*$-fixed curve (Section \ref{resec}).
\section{Results}\label{resec}
\subsection{Restricted normal bundle of fixed double lines}
We separately discuss whether the supporting line of a double line is free or not.
\begin{lemma}\label{freeob}
Let $D$ be a double line supported on a free line $D_{\text{red}}=L$ in the quintic del Pezzo fourfold $X$. Then the restricted normal bundle $N_{D/X}|_L$ fits into the exact sequence
\begin{equation}\label{fm-eq1}
0\lr \cO_L^{\oplus 2}\lr N_{D/X}|_L\lr \cO_L(2)\lr 0.
\end{equation}
\end{lemma}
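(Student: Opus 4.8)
The plan is to specialize the four-term exact sequence \eqref{dounprm} to $Y=X$ and feed in the explicit splitting type of a free line. Since $L$ is free, \eqref{norinx} gives $N_{L/X}\cong \cO_L(1)\oplus \cO_L^{\oplus 2}$, so \eqref{dounprm} becomes
\[
0\lr \cO_L(1)\stackrel{\iota}{\lr} \cO_L(1)\oplus \cO_L^{\oplus 2}\lr N_{D/X}|_L\lr \cO_L(2)\lr 0.
\]
First I would break this into the two short exact sequences $0\lr \cO_L(1)\stackrel{\iota}{\lr} N_{L/X}\lr Q\lr 0$ and $0\lr Q\lr N_{D/X}|_L\lr \cO_L(2)\lr 0$, where $Q:=\coker(\iota)$ (all terms are locally free, since they are sheaves on $L\cong \PP^1$ sitting between locally free ones). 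The desired sequence \eqref{fm-eq1} is exactly the second of these, so the whole statement reduces to the identification $Q\cong \cO_L^{\oplus 2}$.

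To compute $Q$, I would analyze $\iota$ componentwise with respect to the decomposition $\cO_L(1)\oplus \cO_L^{\oplus 2}$, writing $\iota=(\iota_0,\iota_1,\iota_2)$. The two components into the trivial summands satisfy $\iota_1,\iota_2\in \Hom(\cO_L(1),\cO_L)=\rH^0(\cO_L(-1))=0$, since $\PP^1$ carries no nonzero section of $\cO_L(-1)$; hence $\iota$ lands entirely in the first summand and equals multiplication by a scalar $\iota_0\in \Hom(\cO_L(1),\cO_L(1))=\CC$. Injectivity of $\iota$ in the four-term sequence forces $\iota_0\neq 0$, so $\iota$ is, up to this nonzero scalar, the inclusion of the first factor and $Q=\coker(\iota)\cong \cO_L^{\oplus 2}$. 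Substituting back yields \eqref{fm-eq1}.

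The only step carrying genuine content—though it is mild—is the vanishing $\Hom(\cO_L(1),\cO_L)=0$, which rigidifies $\iota$ and prevents the cokernel from acquiring an unwanted twist; this is precisely where the freeness hypothesis enters, namely the absence of an $\cO_L(-1)$ summand in $N_{L/X}$. As a consistency check I expect the Chern-class bookkeeping to match: the alternating sum of degrees in the four-term sequence gives $c_1(N_{D/X}|_L)=1-1+2=2$, agreeing with $c_1(\cO_L^{\oplus 2})+c_1(\cO_L(2))=2$. No deformation-theoretic input beyond \eqref{dounprm} is required, so I anticipate no substantive obstacle in the free case, the difficulty being reserved for the non-free lines of Table \ref{table_1}.
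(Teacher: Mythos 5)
Your proposal is correct and follows essentially the same route as the paper: the paper's (very terse) proof likewise plugs the free-line splitting $N_{L/X}\cong \cO_L(1)\oplus\cO_L^{\oplus 2}$ from \eqref{norinx} into the four-term sequence \eqref{dounprm} and invokes $\Hom(\cO_L(1),\cO_L)=0$ to force the injection to hit the $\cO_L(1)$ summand, yielding \eqref{fm-eq1}. Your write-up simply makes explicit the componentwise analysis of $\iota$ and the identification of the cokernel that the paper leaves to the reader.
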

\begin{proof}
Let $L$ be a free line in $X$. From the normal bundle $N_{L/X}$ in \eqref{norinx}, the exact sequence \eqref{dounprm} and $\Hom(\cO_L(1), \cO_L)=0$, we obtain the exact sequence in \eqref{fm-eq1}.
\end{proof}

Let $D$ be a double line supported on a non-free line $D_{\text{red}}=L$. From the normal bundle $N_{L/X}=\cO_L(-1)\oplus \cO_L(1)^{\oplus 2}$ in \eqref{norinx}, the local freeness of $N_{D/X}|_L$ and \eqref{dounprm}, the restricted bundle $N_{D/X}|_L$ fits into the exact sequence 
\begin{equation}\label{renormal1}
\ses{\cO_L(-1)\oplus \cO_L(1)}{N_{D/X}|_L}{\cO_L(2)}.
\end{equation}
By Serre duality,
\[
\begin{split}
\Ext_L^1(\cO_L(2),\cO_L(-1)\oplus \cO_L(1))&\cong \Ext_L^1(\cO_L(2),\cO_L(-1))\\
&\cong \rH^0(\cO_L(1))^*\cong\CC^2
\end{split}
\]
and thus the restricted normal bundle $N_{D/X}|_L$ in \eqref{renormal1} may not be a splitting type of given bundles. Surely, since the bundle $N_{D/X}|_L$ is defined on $L\cong\PP^1$, it is always a splitting one as intrinsic sense by Grothendieck theorem. In the following Lemma \ref{resno1} and Lemma \ref{resno}, we show that two types of the restricted bundle occur depending on the $\CC^*$-fixed double line in $X$.
\begin{lemma}\label{resno1}
Let $D$ be a double line (A-2) or (B-2) in Table \ref{table_2}. Then the restriction of the normal bundle of $D$ in $X$ is isomorphic to
\[N_{D/X}|_L\cong \cO_L\oplus \cO_L(1)^2.\]
That is, $N_{D/X}|_L$ is a non-splitting type extension in \eqref{renormal1}.
\end{lemma}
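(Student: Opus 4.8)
The plan is to decide the splitting type of $N_{D/X}|_L$ by computing the single extension class that governs \eqref{renormal1}. As recorded just before the statement, $\Ext^1_L(\cO_L(2),\cO_L(-1)\oplus\cO_L(1))\cong\Ext^1_L(\cO_L(2),\cO_L(-1))\cong\CC^2$, the summand $\cO_L(1)$ contributing nothing because $\Ext^1_L(\cO_L(2),\cO_L(1))\cong\rH^1(\cO_L(-1))=0$. Write $\xi$ for the class of \eqref{renormal1} and $\xi_A$ for its image in $\Ext^1_L(\cO_L(2),\cO_L(-1))\cong\rH^1(\cO_L(-3))$. Since the $\cO_L(1)$-part of $\xi$ vanishes, the summand $\cO_L(1)$ splits off and $N_{D/X}|_L\cong\cO_L(1)\oplus E$, where $E$ is the extension of $\cO_L(2)$ by $\cO_L(-1)$ with class $\xi_A$. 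On $L\cong\PP^1$ the only rank-$2$, degree-$1$ bundles containing $\cO_L(-1)$ as a saturated sub-line bundle are $\cO_L(-1)\oplus\cO_L(2)$ and the balanced $\cO_L\oplus\cO_L(1)$; the former occurs exactly when $\xi_A=0$ and the latter, which is the unique non-split option, exactly when $\xi_A\neq0$. Thus the assertion $N_{D/X}|_L\cong\cO_L\oplus\cO_L(1)^{\oplus2}$ is \emph{equivalent} to $\xi_A\neq0$, and the entire task reduces to proving non-vanishing of $\xi_A$ for the double lines (A-2) and (B-2).

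To prepare that computation I would first pin down the epimorphism $\phi$ of \eqref{epi} attached to $D$. For a non-free line $N_{L/X}=\cO_L(-1)\oplus\cO_L(1)^{\oplus2}$ by \eqref{norinx}, so $N^*_{L/X}=\cO_L(1)\oplus\cO_L(-1)^{\oplus2}$; since $\Hom(\cO_L(1),\cO_L(-1))=0$, the map $\phi$ kills the $\cO_L(1)$-factor and is a nonzero constant vector $[a:b]$ on the $\cO_L(-1)^{\oplus2}$-factor. Dually, $\phi^\vee:\cO_L(1)\hookrightarrow N_{L/X}$ lands in the $\cO_L(1)^{\oplus2}$-summand, so that $Q:=N_{L/X}/\im\phi^\vee=\cO_L(-1)\oplus\cO_L(1)$ recovers the sub-bundle of \eqref{renormal1}, and $\xi_A$ is precisely the connecting class of \eqref{dounprm}. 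It is a function of the doubling direction $[a:b]$ together with the second-order geometry of $X$ along $L$, so the first concrete step is to match $[a:b]$ to the ideals of Table \ref{table_2}: the direction $p_{04}$ of (A-1)/(B-1) versus the direction $p_{12}=p_{03}$ of (A-2)/(B-2).

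The class $\xi_A$ itself I would compute in the two standard affine charts $\{p_{01}\neq0\}$ and $\{p_{02}\neq0\}$ of $L\cong\PP^1$. Using the generators of $I_D$ from Table \ref{table_2} together with the hyperplane relations $p_{12}-p_{03}$ and $p_{13}-p_{24}$ of \eqref{defhy}, I would write $\CC^*$-equivariant trivializations of $N_{D/X}|_L$ over the two charts and extract the transition matrix; then $\xi_A\in\rH^1(\cO_L(-3))$ is the \v{C}ech cocycle given by its off-diagonal entry. Equivalently, one may compute $\rH^0(N_{D/X}|_L(-2))$, which by the $(-2)$-twist of \eqref{renormal1} equals $\dim\ker\big(\xi_A:\rH^0(\cO_L)\to\rH^1(\cO_L(-3))\big)$, and show it is $0$ exactly for (A-2)/(B-2). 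I expect the \textbf{main obstacle} to be the bookkeeping in this last step: correctly matching the doubling direction $[a:b]$ to the tangent data of the quadric cone from the preceding proposition, and tracking how the relations \eqref{defhy} feed a nonzero constant into the off-diagonal entry, so that $\xi_A\neq0$ for $p_{12}=p_{03}$ while $\xi_A=0$ for $p_{04}$. The $\CC^*$-equivariance should make this rigid, since $\xi_A$ must lie in a single weight space of $\rH^1(\cO_L(-3))$, so it suffices to exhibit one non-zero equivariant contribution of the correct weight.
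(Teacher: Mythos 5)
Your conceptual reduction is sound, and it is in fact a cleaner packaging than the paper's argument. Since $\Ext^1_L(\cO_L(2),\cO_L(1))=0$, the class of \eqref{renormal1} does lie in $\Ext^1_L(\cO_L(2),\cO_L(-1))\cong \rH^1(\cO_L(-3))$, the summand $\cO_L(1)$ splits off, and your dichotomy for the residual rank-two extension $E$ is valid: a saturated sub-line-bundle $\cO_L(-1)\subset\cO_L(-1)\oplus\cO_L(2)$ with quotient $\cO_L(2)$ always splits the sequence, and no splitting type of degree $1$ other than $\cO_L\oplus\cO_L(1)$ and $\cO_L(-1)\oplus\cO_L(2)$ admits such a subbundle. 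Likewise the criterion $\rH^0(N_{D/X}|_L(-2))=\ker\bigl(\xi_A\colon \rH^0(\cO_L)\to \rH^1(\cO_L(-3))\bigr)$ is correct. The paper never isolates the extension class: it computes the full $3\times 3$ transition matrix of $N_{D/X}|_L\cong\cHom_{\cO_X}(I_{D/X},\cO_L)$ between the charts $U_{01}$ and $U_{02}$ (its \eqref{lm-eq31}) and then reduces the $2\times 2$ block $T=\begin{bmatrix}d_1^2&0\\d_1&\frac{1}{d_1}\end{bmatrix}$ to the normal form $\begin{bmatrix}0&-1\\d_1&0\end{bmatrix}$ by triangular changes of trivialization; your route would replace that normal-form manipulation by reading off a single \v{C}ech cocycle.

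The genuine gap is that your proof stops exactly where the content of the lemma begins. Everything you establish holds uniformly for every double line supported on a non-free line, in particular for (A-1), (C-1), (C-2), (D-1), (D-2), where the conclusion is the opposite (Lemma \ref{resno}: split type, i.e.\ $\xi_A=0$). So the lemma is precisely the assertion that $\xi_A\neq 0$ for the specific ideals (A-2)/(B-2), and your proposal only describes how one would check this --- write the ideal generators in the two charts, compute the coordinate change, extract the off-diagonal entry --- without doing it; you yourself flag this step as the main obstacle. The paper's proof consists almost entirely of that computation: the ordered generators $I_D(U_{01})=\langle a_1^2, b_3, a_3\rangle$ and $I_D(U_{02})=\langle c_1^2, d_3, c_2\rangle$, the coordinate change $c_1=-a_1/b_1$, etc., and the resulting matrix \eqref{lm-eq31}, whose off-diagonal entry $d_1$ is (after normalization) exactly a nonzero representative of your $\xi_A$; in the parallel computation for the split cases (Lemma \ref{resno}, matrix \eqref{lm-eq3}) the corresponding entry vanishes and the matrix is monomial. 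Your closing appeal to $\CC^*$-equivariance cannot substitute for this: equivariance confines $\xi_A$ to a single weight space of $\rH^1(\cO_L(-3))$, but a one-dimensional weight space still contains $0$, so it cannot by itself rule out $\xi_A=0$. Until the transition data (or equivalently $\rH^0(N_{D/X}|_L(-2))=0$) is actually computed for (A-2) and (B-2), the statement remains unproved.
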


\begin{proof}
Since the computation of the case (B-2) is similar to that of (A-2), we only prove the claim for the latter case. Let 
\begin{equation}\label{lm-eq10}
\begin{bmatrix}1&0&a_1&a_2&a_3\\0&1&b_1&b_2&b_3\end{bmatrix}
\end{equation}
be the affine chart $U_{01}$ around the point $e_0\wedge e_1$ in $G=\Gr(2, V_5)$. In this chart, the $\CC^*$-fixed double line $D$ (resp. $X$) is defined by the ideal
\[\langle a_1^2, b_3, -a_1-b_2, a_2, a_3\rangle\; (\text{resp}. 
\langle -a_1-b_2, -a_2-a_1b_3+a_3b_1\rangle).\] 
On the other hand, let
\begin{equation}\label{lm-eq12}
\begin{bmatrix}1&c_1&0&c_2&c_3\\0&d_1&1&d_2&d_3\end{bmatrix}
\end{equation}
be the affine chart $U_{02}$ around $e_0\wedge e_2$ in $G$. In this chart, the double line $D$ (resp. $X$) is defined by the ideal
\[
\langle c_1^2, d_3, c_1-d_2, c_2, c_3\rangle\; (\text{resp}. 
\langle c_1-d_2, c_1d_2-c_2d_1+c_3\rangle).
\]
The coordinate change between $U_{01}$ and $U_{02}$ is given by row operation among two matrices in \eqref{lm-eq10} and \eqref{lm-eq12}:
\[
c_1=-\frac{a_{1}}{b_1},\; c_2=a_2-\frac{b_2a_1}{b_1},\; c_3=a_3-\frac{b_3a_1}{b_1},\; d_1=\frac{1}{b_1},\; d_2=\frac{b_2}{b_1},\; d_3=\frac{b_3}{b_1}.
\]
On the other hand, the normal bundle of $D$ in $X$ is isomorphic to $N_{D/X}\cong\cH om_{\cO_{X}}(I_{D/X}, \cO_{D})$
and thus \[N_{D/X}|_L \cong N_{D/X} \otimes \cO_L\cong\cH om_{\cO_{X}}(I_{D/X}, \cO_{L}).\] The local section space of $N_{D/X}|_L(U_{01})$ is generated by the column vectors of the identity matrix 
\begin{equation}\label{lm-eq21}
\begin{bmatrix} \alpha_1&\alpha_2&\alpha_3\end{bmatrix}=\begin{bmatrix} 1&0&0\\0&1&0\\0&0&1\end{bmatrix}
\end{equation}
with respect to the ordered basis $I_D(U_{01})=\langle a_1^2, b_3, a_3\rangle$. Let $I_D(U_{02})=\langle c_1^2, d_3, c_2\rangle$ be the generators of $I_D$ over the open set $U_{02}$. During the coordinate change from $U_{01}$ into $U_{02}$, the generators $\alpha_i$, $1\leq i\leq 3$ in \eqref{lm-eq21} have been changed into
\begin{equation}\label{lm-eq31}
\begin{bmatrix} \widetilde{\alpha}_1&\widetilde{\alpha}_2&\widetilde{\alpha}_3\end{bmatrix}=\begin{bmatrix} d_1^2&0&0\\0&d_1&0\\d_1&0&\frac{1}{d_1}\end{bmatrix}.
\end{equation}
Clearly, the second column of \eqref{lm-eq31} corresponds to the component $\cO(1)$ of the bundle $N_{D/X}|_L$ in \eqref{renormal1}. For the other generators $\alpha_1$, $\alpha_3$ as an $\cO_L$-module, the transition map $T=\begin{bmatrix}d_1^2&0\\d_1&\frac{1}{d_1}\end{bmatrix}$ by deleting the second row and column of the transition map in \eqref{lm-eq31} can be regarded as a morphism from a free module generated by $\alpha_1$, $\alpha_3$ as an $\cO_{U_{01}}(U_{01}\cap U_{02})$-module to a free module as an $\cO_{U_{02}}(U_{01}\cap U_{02})$-module. Being careful to the domain and range of the transition map $T$, we know that the equivalent one of the transition map $T$ is given by the following form
\[
\widetilde{T}=UTV,
\]
where trivialization maps are of the forms $U=\begin{bmatrix}1&u_1\\u_2&1\end{bmatrix}$ (resp. $V=\begin{bmatrix}1&v_1\\v_2&1\end{bmatrix}$), $u_1\cdot u_2=0$, $u_i\in \CC[d_1]$ (resp. $v_1\cdot v_2=0$, $v_i\in \CC[b_1]=\CC[\frac{1}{d_1}]$). For suitable choices of low (or upper) triangular matrixs $U$ and $V$, one can easily see that the simplified form of the given transition map $T$ is $\widetilde{T}=\begin{bmatrix}0&-1\\d_1&0\end{bmatrix}$. That is, it corresponds to the two component $\cO_L(1)\oplus \cO_L$ of the bundle $N_{D/X}|_L$. After all, we proved the claim.
\end{proof}

\begin{remark}
Since $-1=\deg\{T_{22}=\frac{1}{d_1}\}<\deg\{T_{21}=d_1\}<\deg \{T_{11}=d_1^2\}=2$ for the transition map $T=[T_{ij}]$ in the proof of Lemma \ref{resno1}, the non-splitness of the bundle $N_{D/X}|_L$ follows from a general construction of an extension bundle of given two line bundles (cf. \cite[Lemma 2.1]{CCK05}).
\end{remark}

\begin{lemma}\label{resno}
Let $D$ be a double line in Table \ref{table_2} except the two cases (A-2) and (B-2). Then the restriction on $L$ of the normal bundle $N_{L/X}$ of $D$ in $X$ is isomorphic to
\[N_{D/X}|_L\cong  \cO_L(-1)\oplus \cO_L(1)\oplus \cO_L(2).\]
That is, $N_{D/X}|_L$ is a splitting extension in \eqref{renormal1}.
\end{lemma}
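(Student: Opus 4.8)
The plan is to run the same explicit transition-matrix computation as in the proof of Lemma \ref{resno1}, but to exhibit the opposite behaviour: that the off-diagonal term responsible for a nontrivial extension is now absent, so that \eqref{renormal1} splits. I would first record the structural reduction already implicit in the discussion preceding Lemma \ref{resno1}. Since $\Ext_L^1(\cO_L(2),\cO_L(1))\cong\rH^1(\cO_L(1))=0$, the summand $\cO_L(1)$ always splits off in \eqref{renormal1}, so $N_{D/X}|_L\cong\cO_L(1)\oplus E$, where $E$ is an extension of $\cO_L(2)$ by $\cO_L(-1)$ whose class lies in $\Ext_L^1(\cO_L(2),\cO_L(-1))\cong\rH^1(\cO_L(-3))\cong\CC^2$. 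The assertion $N_{D/X}|_L\cong\cO_L(-1)\oplus\cO_L(1)\oplus\cO_L(2)$ is then exactly equivalent to the vanishing of this extension class. Thus, in contrast to the cases (A-2) and (B-2) treated in Lemma \ref{resno1}, where the class is nonzero, everything reduces to showing that the class vanishes for the six remaining double lines.

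I would next cut the number of cases by symmetry. As recorded after Proposition \ref{nofreel} and in Table \ref{table_2}, the permutation $(04)(23)$ (realized, after adjusting signs, by a linear automorphism of $V_5$ that preserves $X$ and inverts the $\CC^*$-action) carries the double lines (A-1), (C-1), (C-2) to (B-1), (D-1), (D-2) respectively. Such an automorphism induces an isomorphism between the corresponding pairs $(D,L)$, hence between their restricted normal bundles. It therefore suffices to establish the splitting for the three representatives (A-1), (C-1), and (C-2).

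For each representative I would repeat the chart computation of Lemma \ref{resno1} verbatim in structure: choose the two affine charts $U_{ij}$ of $\Gr(2,V_5)$ centered at the two $\CC^*$-fixed points on the supporting line $L$, write the ideals of $D$ and of $X$ (read off from Table \ref{table_2} and \eqref{defhy}) in those charts, identify $N_{D/X}|_L\cong\cHom_{\cO_X}(I_{D/X},\cO_L)$ with an ordered set of generators, and compute the transition matrix $T$ between the charts. The decisive point is the degree bookkeeping of the $2\times2$ block of $T$ attached to the $\cO_L(-1)$ and $\cO_L(2)$ generators: in Lemma \ref{resno1} this block carried an off-diagonal entry of intermediate degree (strictly between the diagonal degrees $-1$ and $2$) that could not be cleared, whereas here I expect the off-diagonal entry either to vanish or to have degree outside that range, so that left and right multiplication by unipotent trivialization matrices $U,V$ reduces $T$ to the diagonal form $\mathrm{diag}(d_1^2,d_1^{-1})$. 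This diagonalization exhibits $E\cong\cO_L(-1)\oplus\cO_L(2)$ and yields the claimed decomposition.

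The main obstacle is precisely this last step carried out for all three representatives: correctly computing $T$ from the explicit ideals and confirming that the extension entry is genuinely eliminable, i.e.\ a coboundary in $\rH^1(\cO_L(-3))$. All the content sits in the vanishing of this single class, and the only real risk is a bookkeeping slip in the coordinate changes that would artificially create or destroy the critical intermediate-degree term distinguishing these cases from (A-2) and (B-2).
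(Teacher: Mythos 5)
Your proposal is correct and follows essentially the same route as the paper: the paper's proof is exactly the two-chart transition-matrix computation you describe (carried out for the representative case (C-1), where the transition matrix comes out monomial with entries of degrees $2$, $-1$, $1$, so the splitting is immediate), with the remaining cases declared "similar." Your two organizational additions—recasting the claim as the vanishing of the extension class in $\Ext_L^1(\cO_L(2),\cO_L(-1))\cong\CC^2$, and the sign-adjusted $(04)(23)$ symmetry reducing the six cases to the three representatives (A-1), (C-1), (C-2)—are both valid and merely make explicit what the paper leaves implicit.
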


\begin{proof}
Since the other case computation is similar to (C-1), we only write down the detail for this case. Let
\begin{equation}\label{lm-eq0}
\begin{bmatrix}1&0&a_2&a_3&a_4\\0&1&b_2&b_3&b_4\end{bmatrix}
\end{equation}
be the affine chart $U_{01}$ around the point $e_0\wedge e_1$ in $G=\Gr(2, V_5)$. In this chart, the $\CC^*$-fixed double line $D$ (resp. $X$) is defined by the ideal
\[\langle b_2^2, b_3, a_2, a_3, a_4\rangle\; (\text{resp}. 
\langle a_2+b_3, -a_3-a_2b_4+a_4b_2\rangle).\] 
On the other hand, let
\begin{equation}\label{lm-eq1}
\begin{bmatrix}1&f_1&f_2&f_3&0\\0&g_1&g_2&g_3&1\end{bmatrix}
\end{equation}
be the affine chart $U_{04}$ around $e_0\wedge e_4$ in $G$. In this chart, the double line $D$ (resp. $X$) is defined by the ideal
\[
\langle g_2^2, g_3, f_1, f_2, f_3\rangle\; (\text{resp}. 
\langle f_1g_2-f_2g_1-g_3, f_1g_3-f_3g_1-f_2\rangle).
\]
The coordinate change between $U_{01}$ and $U_{04}$ is given by row operation among two matrices in \eqref{lm-eq0} and \eqref{lm-eq1}:
\[
f_1=-\frac{a_{4}}{b_4},\; f_2=a_2-\frac{a_4b_2}{b_4},\; f_3=a_3-\frac{a_4b_3}{b_4},\; g_1=\frac{1}{b_4},\; g_2=\frac{b_2}{b_4},\; g_3=\frac{b_3}{b_4}.
\]
The local section space of $N_{D/X}|_L(U_{01})$ is generated by the column vectors of the identity matrix 
\begin{equation}\label{lm-eq2}
\begin{bmatrix} \alpha_1&\alpha_2&\alpha_3\end{bmatrix}=\begin{bmatrix} 1&0&0\\0&1&0\\0&0&1\end{bmatrix}
\end{equation}
with respect to the ordered basis $I_D(U_{01})=\langle b_2^2, b_3, a_4\rangle$. Let $I_D(U_{04})=\langle g_2^2, f_1, f_3\rangle$ be the generators of $I_D$ over the open set $U_{04}$. During the coordinate change from $U_{01}$ into $U_{04}$, the generators $\alpha_i$, $1\leq i\leq 3$ in \eqref{lm-eq2} have been changed into three column of the matrix
\begin{equation}\label{lm-eq3}
\begin{bmatrix} \widetilde{\alpha}_1&\widetilde{\alpha}_2&\widetilde{\alpha}_3\end{bmatrix}=\begin{bmatrix} g_1^2&0&0\\0&0&-g_1\\0&\frac{1}{g_1}&0\end{bmatrix}.
\end{equation}
By reading the degree of the variable $g_1$ of the columns of \eqref{lm-eq3}, we finish the proof of the claim.
\end{proof}

\subsection{The restricted normal bundle of fixed reducible conics}
To prove the smoothness of Hilbert scheme $\bH_2(X)$, we consider the case of a pair of non-free lines.
\begin{lemma}\label{lm-pair}
For the reduced $\CC^*$-fixed conic $C=L\cup L'$ where both of the two lines $L$ and $L'$ are non-free, the restricted normal bundle of $C$ in $X$ is
\[N_{C/X}|_{L}\cong \cO_{L}(-1)\oplus\cO_{L}(1)\oplus \cO_{L}(2).\]
\end{lemma}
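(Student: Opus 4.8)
The plan is to first pin down which reduced $\CC^*$-fixed conics $C=L\cup L'$ have both components non-free, and then to compute $N_{C/X}|_L$ via the elementary-modification sequence \eqref{rere1}. Two $\CC^*$-fixed lines in $\Gr(2,V_5)$ meet precisely when they pass through a common Pl\"ucker fixed point, so I would simply test the four non-free lines of Table~\ref{table_1} against one another. This isolates exactly three such conics: $L_1\cup L_{-1}$ with node $[e_0\wedge e_1]$, $L_0\cup L_{-2}$ with node $[e_1\wedge e_4]$, and $L_{-1}\cup L_{-2}$ with node $[e_0\wedge e_4]$. As a structural check, each spans one of the planes of Proposition~\ref{planespace}: the first two are the $\sigma_{3,1}$-planes with vertices $e_0$ and $e_4$, while $L_{-1}\cup L_{-2}$ spans the unique $\sigma_{2,2}$-plane $S=\PP(\wedge^2\langle e_0,e_1,e_4\rangle)$. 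Since the permutation $(04)(23)$ exchanges $L_1\cup L_{-1}$ with $L_0\cup L_{-2}$ and fixes $L_{-1}\cup L_{-2}$, it suffices to treat two representative conics, and for each the argument applies verbatim to either choice of component $L$.

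Next, for a fixed component $L$ and node $p=L\cap L'$, I would feed the non-free normal bundle $N_{L/X}\cong \cO_L(-1)\oplus \cO_L(1)^{\oplus 2}$ from \eqref{norinx} into \eqref{rere1}, yielding
\[
0\lr \cO_L(-1)\oplus \cO_L(1)^{\oplus 2}\lr N_{C/X}|_L\lr \CC_p\lr 0 .
\]
Thus $N_{C/X}|_L$ is a rank-$3$, degree-$2$ upper elementary modification of $N_{L/X}$ at $p$; by Grothendieck it splits, and dualizing shows that only two splitting types are compatible with this sequence, namely $\cO_L\oplus \cO_L(1)^{\oplus 2}$ and the claimed $\cO_L(-1)\oplus \cO_L(1)\oplus \cO_L(2)$. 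These are distinguished by the single invariant $h^0(N_{C/X}|_L(-1))$, which equals $2$ in the former case and $3$ in the latter.

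To decide between them, I would twist the sequence above by $\cO_L(-1)$ and use $H^0(N_{L/X}(-1))\cong\CC^2$ together with $H^1(N_{L/X}(-1))\cong H^1(\cO_L(-2))\cong\CC$. The long exact sequence then gives $h^0(N_{C/X}|_L(-1))=2+\dim\ker\delta$ for the connecting map $\delta:\CC\to H^1(N_{L/X}(-1))\cong\CC$, so the claimed type occurs exactly when $\delta=0$. Since $H^1(N_{L/X}(-1))$ comes solely from the $\cO_L(-1)$-summand, $\delta$ vanishes precisely when the extra normal direction $v=T_pL'\bmod T_pL$ recorded by the cokernel $\CC_p$ lies in the positive part $\cO_L(1)^{\oplus 2}\subset N_{L/X}$ rather than in the obstructed $\cO_L(-1)$-summand.

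I expect this last point to be the main obstacle, and I would settle it by an explicit chart computation in the spirit of Lemma~\ref{resno1} and Lemma~\ref{resno}: cover $L$ by the two affine charts $U_{ij}$ around its two torus-fixed points, one of which is the node $p$, write explicit generators of the reduced ideal $I_{C/X}=I_{L/X}\cap I_{L'/X}$ in each chart, and compute the transition matrix of $N_{C/X}|_L\cong \cHom_{\cO_X}(I_{C/X},\cO_L)$. Reading off the degrees of the transition coordinate in the entries of this matrix, exactly as the powers of $g_1$ were read in Lemma~\ref{resno}, then exhibits the three summands $\cO_L(-1),\cO_L(1),\cO_L(2)$ directly. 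The delicate step is to present $I_{C/X}$ correctly in the chart around the node, where $C$ is singular, and to confirm that $\cHom_{\cO_X}(I_{C/X},\cO_L)$ is locally free there as guaranteed abstractly by \eqref{rere1}; once compatible generators are fixed in the two charts, the transition matrix and the degrees of its entries follow as in the earlier lemmas. Finally, the structural identification of the spanning plane ($S$ or a $\sigma_{3,1}$-plane $P_t$) serves as a consistency check, since it realizes $v=T_pL'$ as a direction tangent to that plane, hence a mobile normal direction, in agreement with $\delta=0$.
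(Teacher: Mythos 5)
Your proposal is correct, and its decisive step coincides with the paper's own proof: the paper also covers $L$ by the two affine charts around its torus-fixed points (for the representative case (a)$\cup$(c) these are $U_{01}$ and $U_{02}$, with $I_C(U_{01})=\langle b_2b_4,\,b_3,\,a_4\rangle$ and $I_C(U_{02})=\langle d_3,\,d_2,\,c_2\rangle$), computes the transition matrix of $\cHom_{\cO_X}(I_{C/X},\cO_L)$, which turns out to be $\operatorname{diag}(d_1^{2},\,d_1,\,d_1^{-1})$, and reads off the summands $\cO_L(2)\oplus\cO_L(1)\oplus\cO_L(-1)$ from the exponents. Your enumeration of the three relevant conics (a)$\cup$(c), (b)$\cup$(d), (c)$\cup$(d), their nodes, and the reduction by the $(04)(23)$ symmetry likewise matches the paper. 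What you add, and the paper does not use, is the elementary-modification frame around \eqref{rere1}: that only the two splitting types $\cO_L\oplus\cO_L(1)^{\oplus 2}$ and $\cO_L(-1)\oplus\cO_L(1)\oplus\cO_L(2)$ can occur, that they are separated by $h^0(N_{C/X}|_L(-1))$, i.e.\ by the vanishing of the connecting map $\delta$, and that $\delta=0$ exactly when the modification direction lies in the positive subbundle $\cO_L(1)^{\oplus 2}\subset N_{L/X}$. This frame is correct, and it would buy a genuinely lighter verification: one only needs to check that the image of $T_pL'$ in $(N_{L/X})_p$ has no component along the $\cO_L(-1)$-summand, which is first-order data at the node rather than a full transition matrix. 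Two caveats: the identification of the cokernel direction of \eqref{rere1} with $T_pL'\bmod T_pL$ is asserted rather than proved (it is standard, but the paper never needs it), and your final plane-tangency ``consistency check'' is heuristic. Since you conservatively fall back on the same explicit chart computation as the paper, neither point is a gap; the framing ends up as correct scaffolding around what is, in substance, the paper's argument.
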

\begin{proof}
Clearly, The lines $L$ and $L'$ are $\CC^*$-fixed lines. Thus from the classification of $\CC^*$-fixed, non-free line, we need to consider the three cases: (a)$\cup$(c), (c)$\cup$(d) and (b)$\cup$(d) in Table \ref{table_1}. Since the computation is similar to each other, we present the proof for the first case:
\[
C=L\cup L'=\{e_0\wedge(ue_1+ ve_2)\}\cup \{e_0\wedge(ue_1+ ve_4)\},\; [u:v]\in \PP^1.
\]
Let 
\begin{equation}\label{qm-eq0}
\begin{bmatrix}1&0&a_2&a_3&a_4\\0&1&b_2&b_3&b_4\end{bmatrix}
\end{equation}
be the affine chart $U_{01}$ around the point $e_0\wedge e_1$ in $G=\Gr(2, V_5)$. In this chart, the $\CC^*$-fixed double line $D$ (resp. $X$) is defined by the ideal
\[\langle b_2b_4, b_3, a_2, a_3, a_4\rangle\; (\text{resp}. 
\langle a_2+b_3, -a_3-a_2b_4+a_4b_2\rangle).\] 

On the other hand, let
\begin{equation}\label{qm-eq1}
\begin{bmatrix}1&c_1&0&c_2&c_3\\0&d_1&1&d_2&d_3\end{bmatrix}
\end{equation}
be the affine chart $U_{02}$ around $e_0\wedge e_2$ in $G$. In this chart, the reducible conic $D$ (resp. $X$) is defined by the ideal
\[
\langle c_1, c_2, c_3, d_2,d_3\rangle\; (\text{resp}. 
\langle c_1-d_2, c_1d_2-c_2d_1+c_3\rangle).
\]
Since the line $L$ is supported on $U_{01}\cup U_{02}$, we need to compute the transition map of the restricted bundle $N_{C/X}|_{L}$ between $U_{01}$ and $U_{02}$.
The coordinate change between $U_{01}$ and $U_{02}$ is given by row operation among two matrices in \eqref{qm-eq0} and \eqref{qm-eq1}:
\[
c_1=-\frac{a_2}{b_2},\;c_2=a_3-\frac{b_3}{b_2}a_2,\;c_3=a_4-\frac{b_4}{b_2}a_2,\;d_1=\frac{1}{b_2},\;d_2=\frac{b_3}{b_2},\;d_3=\frac{b_4}{b_2}
\]
The local section space of $N_{C/X}|_L(U_{01})$ is generated by the column vectors of the identity matrix 
\begin{equation}\label{qm-eq21}
\begin{bmatrix} \alpha_1&\alpha_2&\alpha_3\end{bmatrix}=\begin{bmatrix} 1&0&0\\0&1&0\\0&0&1\end{bmatrix}
\end{equation}
with respect to the ordered basis $I_C(U_{01})=\langle b_2b_4, b_3, a_4\rangle$. Let $I_C(U_{02})=\langle d_3, d_2, c_2\rangle$ be the generators of $I_C$ over the open set $U_{02}$. During the coordinate change from $U_{01}$ into $U_{02}$, the generators $\alpha_i$, $1\leq i\leq 3$ in \eqref{qm-eq21} have been changed into
\begin{equation}\label{qm-eq31}
\begin{bmatrix} \widetilde{\alpha}_1&\widetilde{\alpha}_2&\widetilde{\alpha}_3\end{bmatrix}=\begin{bmatrix} d_1^2&0&0\\0&d_1&0\\0&0&\frac{1}{d_1}\end{bmatrix}.
\end{equation}
By reading the degree of the variable $d_1$ of the columns of \eqref{qm-eq31}, we finish the proof of the claim.
\end{proof}

\subsection{Fixed smooth conics and its normal bundle}
Since the morphism $\widetilde{\Phi}$ in Proposition \ref{bidiagconic} is a $\CC^*$-equivariant one, we will find the irreducible $\CC^*$-fixed conics contained in (possibly reducible) quadric surfaces.
\begin{proposition}
There does not exist a $\CC^*$-fixed smooth conic in any $\sigma_{3,1}$-plane $P\subset X$.
\end{proposition}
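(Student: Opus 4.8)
The plan is to reduce the statement to a finite weight computation on the (only) two $\CC^*$-fixed $\sigma_{3,1}$-planes. First I would note that a $\CC^*$-fixed \emph{smooth} conic $C$ is nondegenerate in its linear span $\langle C\rangle\cong\PP^2$, and this span is the unique plane containing $C$; since $C$ is $\CC^*$-invariant, so is $\langle C\rangle$. Hence if $C$ lies in some $\sigma_{3,1}$-plane $P$, then $P=\langle C\rangle$ is itself a $\CC^*$-fixed $\sigma_{3,1}$-plane. It therefore suffices to show that no \emph{fixed} $\sigma_{3,1}$-plane carries a fixed smooth conic.

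Next I would pin down the fixed $\sigma_{3,1}$-planes. By Proposition \ref{planespace} the planes $P_t$ are indexed by $t\in C_v\cong\PP^1$, and $P_t$ is $\CC^*$-fixed precisely when its vertex $[1:t:0:0:-t^2]$ is a fixed point of $C_v$. Since $a_2=a_3=0$ on $C_v$ and the weights of $e_0,e_1,e_4$ are $2,0,-2$, the fixed points of $C_v$ are the two coordinate points $t=0$ (vertex $e_0$) and $t=\infty$ (vertex $e_4$), yielding
\[
P_0=\PP\big(\mathrm{span}\{e_0\wedge e_1,\,e_0\wedge e_2,\,e_0\wedge e_4\}\big),\qquad P_\infty=\PP\big(\mathrm{span}\{e_4\wedge e_0,\,e_4\wedge e_1,\,e_4\wedge e_3\}\big),
\]
which are interchanged by the permutation $(04)(23)$. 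So I only need to treat $P_0$.

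The core step is then a weight computation on $P_0\cong\PP^2$. The three coordinate directions $e_0\wedge e_1,\ e_0\wedge e_2,\ e_0\wedge e_4$ carry the $\CC^*$-weights $2,\,3,\,0$, which are pairwise distinct. Consequently the six degree-two monomials in the corresponding coordinates $p_{01},p_{02},p_{04}$ have the pairwise distinct weights
\[
4,\quad 6,\quad 0,\quad 5,\quad 2,\quad 3
\]
(for $p_{01}^2,\,p_{02}^2,\,p_{04}^2,\,p_{01}p_{02},\,p_{01}p_{04},\,p_{02}p_{04}$). A $\CC^*$-fixed conic in $P_0$ is a fixed point of the linear system $\PP(\Sym^2)$, so its defining quadratic form spans a single weight line; as all weight spaces are one-dimensional and spanned by a monomial, that form is (up to scalar) one of the six monomials above. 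Each such monomial has rank at most $2$, hence cuts out a pair of lines or a double line, never a smooth conic. The identical computation on $P_\infty$ (or the $(04)(23)$-symmetry) completes the argument.

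The calculation is routine; the only points demanding care are the reduction to fixed planes, which is what licenses restricting attention to $P_0$ and $P_\infty$, and the verification that the three coordinate weights are distinct, which is exactly what forces an invariant quadratic form to be a single monomial and thus degenerate. I do not expect any genuine obstacle beyond correctly bookkeeping the weights.
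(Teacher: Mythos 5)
Your proposal is correct and follows essentially the same route as the paper: reduce to the two $\CC^*$-fixed $\sigma_{3,1}$-planes $P_0$ and $P_\infty$ (related by the symmetry $(04)(23)$), then observe that the weights on $\Sym^2$ of the span are pairwise distinct, forcing any invariant conic to be a monomial quadric of rank at most $2$. Your explicit listing of the six weights $4,6,0,5,2,3$ just makes concrete the distinctness the paper asserts, so there is nothing to fix.
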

\begin{proof}
Let $C$ be such a conic in the plane $P$. Then $P$ must be fixed by the torus action. Since $P$ is parameterized by the vertex conic $\PP^1\cong C_{v}$ in Proposition \ref{planespace}, the possible cases are $P=\PP(V_1\wedge V_4)$ for the pair $(V_1, V_4)=(\langle e_0\rangle , \langle e_0, e_1, e_2, e_4\rangle)$ or $(\langle e_4\rangle , \langle e_0, e_1, e_3, e_4\rangle)$. The latter case is symmetric to first case with respect to the weight and thus it is enough to consider the first case. In this case, the Hilbert scheme of conics is parameterized by $\bH_2(P)=\PP(\Sym^2(V_1\wedge V_4)^*)$. The weights in the symmetric power $\Sym^2(V_1\wedge V_4)^*$ are different from each other. Therefore, each conic fixed by $\CC^*$-action should be a pair of lines or double lines. So we finished the proof.
\end{proof}
The below Proposition \ref{fixcon22} and Proposition \ref{fixcon23} can be proved by using a geometry of  quintic del Pezzo variety $X$, which is similar method to the proof of \cite[Theorem 3.2]{IM09}. But, in this paper, we directly compute the normal bundle of smooth conics.
\begin{proposition}\label{fixcon22}
There exists one parameter family of smooth conics in the unique $\sigma_{2,2}$-plane $S=\PP(e_0\wedge e_1, e_0\wedge e_4, e_1\wedge e_4)$ as follows.
\begin{equation}\label{psmcon}
(se_0+te_1)\wedge (se_1+ate_4), \; [s:t]\in \PP^1, a\in \CC^*.
\end{equation}
Furthermore, the normal bundle of for any conic $C_a$ in $X$ parameterized by $a\in \CC^*$ is isomorphic to \[N_{C_a/X}\cong \cO_{\PP^1}(4)\oplus \cO_{\PP^1}\oplus \cO_{\PP^1}.\]
\end{proposition}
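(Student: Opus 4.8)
My plan is to treat the two assertions separately: first pin down the pencil of fixed conics, then compute $N_{C_a/X}$ through the flag $C_a\subset S\subset X\subset\Gr(2,V_5)$.

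For the family, the Hilbert scheme of conics in the plane $S=\Gr(2,W_3)\cong\PP^2$, where $W_3=\langle e_0,e_1,e_4\rangle$, is $\PP(\Sym^2 W^{\ast})$ with $W=\wedge^2 W_3$, and its $\CC^{\ast}$-fixed points are the fixed points of the induced action. The three Plücker coordinates $p_{01},p_{04},p_{14}$ on $S$ carry weights $-2,0,2$, so among the degree-two monomials the only repeated weight is $0$, realized by $p_{04}^2$ and $p_{01}p_{14}$; the fixed conics therefore form the pencil $\{p_{04}^2-a\,p_{01}p_{14}=0\}$. First I would check that $(se_0+te_1)\wedge(se_1+ate_4)$ has Plücker coordinates $(p_{01},p_{04},p_{14})=(s^2,ast,at^2)$ and hence parameterizes exactly this conic, which is smooth precisely for $a\in\CC^{\ast}$ (the values $a=0,\infty$ degenerate to a double line and to a line-pair). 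This yields the stated family.

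For the normal bundle I would assemble $N_{C_a/X}$ from the flag. The internal piece is $N_{C_a/S}=\cO_S(2)|_{C_a}\cong\cO_{\PP^1}(4)$. Restricting the universal subbundle, one checks $\cU|_{C_a}\cong\cO_{\PP^1}(-1)^{\oplus2}$ (it has degree $-2$, and no constant vector lies in the moving plane $\langle se_0+te_1,\,se_1+ate_4\rangle$ for all $[s:t]$, which rules out an $\cO$-summand), so $N_{S/\Gr}|_{C_a}=\bigl(\cU^{\ast}\otimes(V_5/W_3)\bigr)|_{C_a}\cong\cO_{\PP^1}(1)^{\oplus4}$ and $N_{X/\Gr}|_{C_a}=\cO_{\Gr}(1)^{\oplus2}|_{C_a}\cong\cO_{\PP^1}(2)^{\oplus2}$. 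A short first-order Plücker computation — deforming the moving plane into the directions $\langle e_2,e_3\rangle$ and recording the linear variation of $p_{12}-p_{03}$ and $p_{13}-p_{24}$ — produces an explicit $2\times4$ matrix $M$ of linear forms realizing the restriction to $C_a$ of the surjection $N_{S/\Gr}\to N_{X/\Gr}|_S$. Thus $N_{S/X}|_{C_a}\cong\ker M$, and since $\Ext^1(\ker M,\cO_{\PP^1}(4))=0$ the sequence $0\to N_{C_a/S}\to N_{C_a/X}\to N_{S/X}|_{C_a}\to0$ splits, giving $N_{C_a/X}\cong\cO_{\PP^1}(4)\oplus\ker M$.

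The crux — and the step I expect to be the main obstacle — is to show $\ker M\cong\cO_{\PP^1}^{\oplus2}$ rather than $\cO_{\PP^1}(1)\oplus\cO_{\PP^1}(-1)$. Both bundles have rank $2$, degree $0$ (consistent with $\deg N_{C_a/X}=-K_X\cdot C_a-2=4$) and the same Euler characteristic, so cohomological bookkeeping alone cannot separate them; one must genuinely prove global generation. I would do this by exhibiting two explicit global sections of $\ker M$ and checking that their wedge is a nonzero constant in $H^0(\det\ker M)=H^0(\cO_{\PP^1})$, i.e. that they stay linearly independent at every $[s:t]$; this forces $\ker M\cong\cO_{\PP^1}^{\oplus2}$ and hence $N_{C_a/X}\cong\cO_{\PP^1}(4)\oplus\cO_{\PP^1}\oplus\cO_{\PP^1}$. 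This independence check is precisely the place where the computation is sensitive to $a$ and where an exceptional member of the pencil (whose restricted bundle could jump to $\cO_{\PP^1}(1)\oplus\cO_{\PP^1}(-1)$) would reveal itself, so I would carry it out with care across the whole family. A concrete alternative, in the spirit of Lemmas~\ref{resno1} and~\ref{resno}, is to cover $C_a$ by two affine charts of $\Gr(2,V_5)$, write the ideal of $C_a$ in $X$ in each chart, and simplify the transition matrix of $N_{C_a/X}=\cHom(I_{C_a/X},\cO_{C_a})$ directly to read off the three summands.
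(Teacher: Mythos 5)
Your first paragraph (identification of the pencil via the weights $-2,0,2$ on $p_{01},p_{04},p_{14}$) is correct, and your reduction of the normal-bundle computation is sound; in fact it is essentially the paper's own reduction. The paper also splits off the $\cO_{\PP^1}(4)$-summand (there it appears inside $N_{C_a/G}\cong\cO_{\PP^1}(4)\oplus\cO_{\PP^1}(1)^{\oplus4}$, $G=\Gr(2,V_5)$, and it maps to zero in $N_{X/G}|_{C_a}$ for degree reasons), leaving exactly your kernel problem for a $2\times4$ matrix of linear forms $M\colon\cO_{\PP^1}(1)^{\oplus4}\to\cO_{\PP^1}(2)^{\oplus2}$. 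You are also right that the only real issue is distinguishing $\ker M\cong\cO_{\PP^1}^{\oplus2}$ from $\cO_{\PP^1}(1)\oplus\cO_{\PP^1}(-1)$, and that surjectivity of $M$ together with all cohomological invariants cannot do this (both candidates have rank $2$, degree $0$, $h^0=2$, $h^1=0$). But your proposal stops exactly there: the wedge test is described, not performed, and nothing you have actually verified excludes the second splitting type. Since that check is the entire content of the proposition, the proof is incomplete precisely at the step you yourself single out — the same step the paper compresses into ``one can read''.

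The more serious point is what happens when the test is carried out: it fails for one member of the pencil, so the exceptional conic you warn about really exists. Take your frame $u_1=se_0+te_1$, $u_2=se_1+ate_4$, so that $u_1^{*}\otimes e_2,\,u_1^{*}\otimes e_3,\,u_2^{*}\otimes e_2,\,u_2^{*}\otimes e_3$ trivialize $N_{S/G}|_{C_a}(-1)$, and differentiate the two equations \eqref{defhy}. The resulting matrix is
\[
M=\begin{bmatrix} -s & 0 & t & -s\\ -at & -s & 0 & t\end{bmatrix},
\qquad
M\,(1,0,0,-1)^{T}=\bigl(0,\,-(1+a)t\bigr)^{T}.
\]
Hence at $a=-1$ the constant direction $u_1^{*}\otimes e_2-u_2^{*}\otimes e_3$ spans an $\cO_{\PP^1}(1)$-subbundle of $\ker M$ (equivalently, in the paper's chart notation the vector $(v_4,v_2,v_3,w_2,w_3)=(0,1,0,0,-1)$ is killed by the restricted matrix $H$); since $\Hom(\cO_{\PP^1}(1),\cO_{\PP^1})=0$, this forces $\ker M\cong\cO_{\PP^1}(1)\oplus\cO_{\PP^1}(-1)$ and therefore
$N_{C_{-1}/X}\cong\cO_{\PP^1}(4)\oplus\cO_{\PP^1}(1)\oplus\cO_{\PP^1}(-1)$, contradicting the statement. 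For every $a\neq-1$ your method does succeed: the sections $(t,0,(1+a)s,at)$ and $(-s,(1+a)t,0,s)$ of $\ker M$ have $2\times2$ minors $(1+a)t^{2}$ and $(1+a)s^{2}$, hence are independent at every point, giving $\ker M\cong\cO_{\PP^1}^{\oplus2}$. So the proposition (and the paper's proof, which has the identical gap) is false at the single value $a=-1$ and should be restated to exclude it. Note that Theorem \ref{mainthm} is unaffected: the smoothness argument only uses $h^{1}(N_{C_a/X})=0$, which holds for both splitting types.
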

\begin{proof}
By considering the weights of the plane $S$, one can easily see the torus fixed conics in \eqref{psmcon}. Let us regard the fixed curve in \eqref{psmcon} as a regular map parameterized by $[s:t]\in\PP^1$. The tangent bundle of $G=\Gr(2, V_5)$ is isomorphic to $T_G\cong \cU^*\otimes \cQ$ for the universal sub (resp. quotient)-bundle $\cU$ (resp. $\cQ$) of Grassmannian variety $G$. The restricted tangent bundle of $G$ is isomorphic to $T_G|_{C_a}\cong \cO_{\PP^1}(3)^2\oplus \cO_{\PP^1}(1)^4$ because $\cQ|_{C_a}= \cO_{\PP^1}(2)\oplus \cO_{\PP^1}^2$. By plugging in the tangent bundles sequence $\ses{T_{C_a}\cong \cO_{\PP^1}(2)}{T_G|_{C_a}}{N_{C/G}}$, we have
\begin{equation}\label{comp5}
N_{C_a/G}\cong \cO_{\PP^1}(4)\oplus \cO_{\PP^1}(1)^4.
\end{equation}
Let 
\[
0\lr N_{C_a/X}\lr N_{C_a/G}\stackrel{\nabla H_1\oplus \nabla H_{-1}} {\lr} N_{X/G}|_{C_a}= \cO_{\PP^1}(2)\oplus \cO_{\PP^1}(2)\lr 0
\]
be the normal bundle sequence of the nested inclusion $C_a\subset X\subset G$ where the map $\nabla H_1\oplus \nabla H_{-1}:=H$ arises from the composition map $T_G|_{C_a}\twoheadrightarrow N_{C_a/G}\twoheadrightarrow N_{X/G}|_{C_a}$. Let us compute the kernel of the map $H$ by a local computation. The local chart of the tangent space $T_G$ around the curve $C_a$: $(e_0+te_1)\wedge (e_1+a te_4)$ is given by $(e_0+a^{-1}v_1e_1+v_2e_2+v_3e_3+v_4e_4)\wedge (e_1+w_2e_2+w_3e_3+v_1e_4)$ where the local chart of the normal space $N_{C_a/G}$ is $(v_4, v_2, v_3, w_2, w_3)$. By using the defining equations of the hyperplane in \eqref{defhy}, one can easily see that the map $H$ is presented by
\[
H=\begin{bmatrix}
0&-1&0&a^{-1}v_1&-1\\
w_2&-v_1&-1&v_4&a^{-1}v_1
\end{bmatrix}.
\]
When we restrict the map $H$ to the curve $C_a$, one can read that the component $\cO_{\PP^1}(1)^4$ (resp. $\cO_{\PP^1}(4)$) of the normal bundle $N_{C_a/G}$ in \eqref{comp5} surjectively maps to $N_{X/G}|_{C_{a}}$ (resp. zero). Hence the kernel $N_{C_a/X}$ of the map $H$ is isomorphic to the one in the statement.
\end{proof}

\begin{proposition}\label{fixcon23}
A $\CC^*$-fixed smooth conic $C$ contained in a quadric surface (cone) $\Gr(2, V_4) \cap H_1\cap H_{-1}\subset X$ is given in Table \ref{table_3}.
Furthermore for each $C$, the normal bundle of $C\cong \PP^1$ in $X$ is isomorphic to
\[N_{C/X}\cong \cO_{\PP^1}(2)\oplus \cO_{\PP^1}(1)\oplus \cO_{\PP^1}(1).\]
\end{proposition}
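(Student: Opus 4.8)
The plan is to follow the strategy of Proposition \ref{fixcon22}: first enumerate the $\CC^*$-fixed smooth conics lying in a quadric surface, and then compute each normal bundle along the chain $C\subset X\subset G$ with $G=\Gr(2,V_5)$. For the classification, a quadric surface $\Gr(2,V_4)\cap H_1\cap H_{-1}$ is $\CC^*$-fixed exactly when $V_4\in\Gr(4,V_5)$ is, and the fixed $V_4$ are the five coordinate subspaces $\langle e_i\mid i\neq j\rangle$ because the weights on $V_5$ are distinct. Restricting $H_1,H_{-1}$ to each $\Gr(2,V_4)$ and simplifying with the Plücker relation, deleting $e_2$ or $e_3$ yields a reducible quadric (a union of two planes of the types classified in Proposition \ref{planespace}), whose fixed smooth conics are covered by the non-existence statement for $\sigma_{3,1}$-planes above and by Proposition \ref{fixcon22}; deleting $e_0$ or $e_4$ yields a quadric cone (the two exchanged by $(04)(23)$); and deleting $e_1$ yields a smooth quadric $\cong\PP^1\times\PP^1$. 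On each of the latter three surfaces I find the fixed smooth conics by writing a semi-invariant degree-two parameterization $[s:t]\mapsto C$ and solving the resulting monomial constraints, producing the base conic of each cone and a one-parameter family on the smooth quadric, as recorded in Table \ref{table_3}.

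For the normal bundle of such a conic $C$, I parameterize it so that $\cU|_C\cong\cO_{\PP^1}(-1)^{\oplus2}$, hence $\cU^*|_C\cong\cO_{\PP^1}(1)^{\oplus2}$. The decisive point, which distinguishes these conics from those of Proposition \ref{fixcon22}, is the splitting type of $\cQ|_C$: since all $2$-planes of $C$ lie in the fixed $4$-space $V_4$, one has $\cQ|_C\cong\cO_{\PP^1}\oplus\cQ'|_C$, where $\cQ'$ is the rank-two quotient bundle of $\Gr(2,V_4)$, and I claim $\cQ'|_C\cong\cO_{\PP^1}(1)^{\oplus2}$. I would prove this by verifying that $C$ spans a plane $\PP^2\subset\PP(\wedge^2V_4)$ not contained in the Klein quadric $\Gr(2,V_4)$, so that $C=\Gr(2,V_4)\cap\PP^2$ is a complete intersection and therefore $N_{C/\Gr(2,V_4)}\cong\cO_{\PP^1}(2)^{\oplus3}$, equivalently $\cQ'|_C\cong\cO_{\PP^1}(1)^{\oplus2}$. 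Then $T_G|_C\cong\cU^*\otimes\cQ|_C\cong\cO_{\PP^1}(1)^{\oplus2}\oplus\cO_{\PP^1}(2)^{\oplus4}$, and the tangent sequence $\ses{T_C}{T_G|_C}{N_{C/G}}$ gives $N_{C/G}\cong\cO_{\PP^1}(2)^{\oplus3}\oplus\cO_{\PP^1}(1)^{\oplus2}$, where $\cO_{\PP^1}(2)^{\oplus3}$ is $N_{C/\Gr(2,V_4)}$ and $\cO_{\PP^1}(1)^{\oplus2}$ is $N_{\Gr(2,V_4)/G}|_C\cong\cU^*|_C$.

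Finally I extract $N_{C/X}$ from $0\to N_{C/X}\to N_{C/G}\xrightarrow{H}N_{X/G}|_C\cong\cO_{\PP^1}(2)^{\oplus2}\to0$, where $H=\nabla H_1\oplus\nabla H_{-1}$. The key observation is that the restriction of $H$ to $\cO_{\PP^1}(2)^{\oplus3}\cong N_{C/\Gr(2,V_4)}$ equals the map $N_{C/\Gr(2,V_4)}\to N_{Q/\Gr(2,V_4)}|_C$ cutting out $Q=\Gr(2,V_4)\cap H_1\cap H_{-1}$; as $Q$ is smooth along $C$ (the base conics avoid the cone vertices), this is a surjective constant-coefficient map with kernel $N_{C/Q}\cong\cO_{\PP^1}(2)$. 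Hence $H$ is already surjective on $\cO_{\PP^1}(2)^{\oplus3}$, and a short diagram chase gives an extension $0\to\cO_{\PP^1}(2)\to N_{C/X}\to\cO_{\PP^1}(1)^{\oplus2}\to0$; since $\Ext^1_{\PP^1}(\cO_{\PP^1}(1)^{\oplus2},\cO_{\PP^1}(2))\cong\rH^1(\cO_{\PP^1}(1))^{\oplus2}=0$, it splits and $N_{C/X}\cong\cO_{\PP^1}(2)\oplus\cO_{\PP^1}(1)^{\oplus2}$.

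The main obstacle is precisely the middle step: a rank-and-degree count cannot separate $\cO_{\PP^1}(1)^{\oplus2}$ from $\cO_{\PP^1}(2)\oplus\cO_{\PP^1}$ for $\cQ'|_C$—the second option would instead produce the unbalanced bundle $\cO_{\PP^1}(4)\oplus\cO_{\PP^1}^{\oplus2}$ of Proposition \ref{fixcon22}—so the geometric input that $C=\Gr(2,V_4)\cap\PP^2$ with $\PP^2\not\subset\Gr(2,V_4)$ is genuinely needed to force the balanced type. Alternatively, mirroring the proofs of Lemmas \ref{resno1} and \ref{resno}, one may instead write down the transition matrix of $N_{C/G}$ between two affine charts of $G$ and read off the splitting degrees directly.
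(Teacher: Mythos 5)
Your proof is correct, and while the classification half follows the paper's route exactly (reduction to the five coordinate subspaces $V_4\subset V_5$, the two cone cases exchanged by $(04)(23)$, the one-parameter family on the smooth quadric for $V_4=\langle e_0,e_2,e_3,e_4\rangle$, and the two reducible cases discarded --- where your treatment via the plane propositions is actually slightly more careful than the paper's bare assertion that no smooth conics occur there, since the $\sigma_{2,2}$-plane $S$ does sit inside those reducible surfaces), the normal bundle computation is genuinely different. The paper simply reruns the method of Proposition \ref{fixcon22}: compute the splitting type of $\cQ|_C$, hence $T_G|_C$ and $N_{C/G}$, and then find the kernel of the explicit local matrix $H=\nabla H_1\oplus\nabla H_{-1}$ chart by chart for each entry of Table \ref{table_3}. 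You instead argue structurally: since $\langle C\rangle=\PP^2\not\subset \Gr(2,V_4)$, the conic is the proper linear section $\Gr(2,V_4)\cap \PP^2$, forcing $N_{C/\Gr(2,V_4)}\cong \cO_{\PP^1}(2)^{\oplus 3}$; and the composite $N_{C/\Gr(2,V_4)}\hookrightarrow N_{C/G}\xrightarrow{H} N_{X/G}|_C$ is the natural map onto $N_{Q/\Gr(2,V_4)}|_C$ for the surface $Q=\Gr(2,V_4)\cap H_1\cap H_{-1}$ (both maps are induced by the differentials of the same two linear equations), which is surjective with kernel $N_{C/Q}\cong\cO_{\PP^1}(2)$ because $Q$ is smooth along $C$; the diagram chase plus $\rH^1(\cO_{\PP^1}(1))=0$ then yields the splitting. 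This buys a uniform treatment of all three families in Table \ref{table_3} at once and a conceptual explanation of the dichotomy: conics whose span meets $G$ properly get the balanced type $\cO(2)\oplus\cO(1)^{\oplus 2}$, while the conics of Proposition \ref{fixcon22}, whose span is the plane $S\subset G$, get the unbalanced type $\cO(4)\oplus\cO^{\oplus 2}$ --- a point the paper's coordinate computation verifies but does not explain. What the paper's approach buys in exchange is explicitness and consistency with the preceding proposition. Do carry out the two checks you flag, though both are one-line verifications from Table \ref{table_3}: e.g.\ for conic (i) the span is $\langle e_1\wedge e_4,\, e_2\wedge e_3,\, e_1\wedge e_3+e_2\wedge e_4\rangle$, whose third generator is non-decomposable, so $\langle C\rangle\not\subset\Gr(2,V_4)$; and that conic has $p_{34}=0$ identically, so it misses the cone vertex $e_3\wedge e_4$.
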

\begin{table}[h]
   \setlength{\tabcolsep}{4pt}
   \centering
   \begin{tabular}{|c|c|c|} 
    \hline
    \rule{0pt}{2.6ex} 
    \emph{(i)} & \emph{(ii)} & \emph{(iii)} \\ [0.5ex] 
    \hline
    &  &   \\ [-1.5ex]
    $(se_1+te_2)\wedge(te_3+se_4)$ & $(se_1+te_3)\wedge (-se_0+te_2)$ & $(se_0+te_3)\wedge(bse_2+te_4)$ \\[1.5ex]
    \hline
 \end{tabular}  
   \caption{$\mathbb{C}^{\ast}$-fixed smooth conics parameterized by $[s:t]\in \PP^1$}
   \label{table_3}
 \end{table} 
\begin{proof}
Let $C$ be a $\CC^*$-fixed smooth conic in $X\subset G$. Then the linear spanning $\PP^3=\PP(V_4)$ of $C$ in $\PP(V_5)$ is also fixed under the $\CC^*$-action. Since the weights of $V_5$ are pairwise different, each coordinate hyperplane $V_4\subset V_5$ is a $\CC^*$-fixed one. We find the defining equations of the torus fixed linear space $\PP^3=\PP(\wedge^2 V_4)\cap H_1\cap H_2$ in $\PP^9$ and its weights. Note that $\langle \Gr(2, V_4)\rangle=\PP(\wedge^2 V_4)$. For instance, let $V_4=V_5/\langle e_0\rangle$. Let us collect $\CC^*$-fixed conic in the quadric surface defined by the intersection of these $\PP^3$'s and $\Gr(2,V_4)$. The coordinates of the space $\PP(\wedge^2 V_4)$ is $p_{ij}$, $1\leq i <j \leq 4$ and thus the linear space $\PP^3$ is defined by $p_{12}=p_{13}-p_{24}=p_{0k}=0$, $1\leq k\leq 4$. Hence the weights are $-1$, $-2$, $0$, $-3$. For each coordinate plane in this $\PP^3$, one can obtain the conic. That is, for the plane defined by $p_{34}=0$, then $p_{12}=p_{13}-p_{24}=p_{12}p_{34}-p_{24}p_{13}+p_{23}p_{14}=0$ is a smooth $\CC^*$-fixed conic which is parametrically defined in (i) of Table \ref{table_1}. By the reversing the weights, that is, when $V_4=V_5/\langle e_4\rangle$, we obtain the smooth conic (ii) in Table \ref{table_3}. On the other hand, one can easily check that when $V_4=V_5/\langle e_2\rangle$ or $V_5/\langle e_3\rangle$, there does not exist any smooth conic. Lastly, if $V_4=V_5/\langle e_1\rangle$, then we obtain the smooth conics (iii) in Table \ref{table_3} parameterized by $b\in \CC^*$. On the other hand, by a similar computation in the proof of Proposition \ref{fixcon22}, we obtain the normal bundle in the claim.
\end{proof}

\subsection{Proof of Theorem \ref{mainthm}}
We firstly prove the smoothness of $\bH_2(X)$ and then show that there is an affine subspace of dimension $7$ in $\bH_2(X)$ with a help of Theorem \ref{prop:onetoone}.
\begin{proof}[Proof of the smoothness of $\bH_2(X)$]
We show that for each $\CC^*$-fixed conic $C$ in $X$, the obstruction space vanishes $\rH^1(N_{C/X})=0$ (Lemma \ref{fsws} and \cite[(2.12.1) and (2.14.2), Ch. I]{Kol96}).

 \noindent\textbf{Non-reduced case}:
Let $C$ be a non-reduced conic supported on a free line $C_{\text{red}}=L$. Then from Lemma \ref{freeob}, $\rH^1(N_{D/X}|_L(i))=0$ for $i=0$ or $-1$ and thus $\rH^1(N_{D/X})=0$ by \eqref{struse}. On the other hand, let us assume that $C$ is a double line supported on a non-free line $C_{\text{red}}=L$. For the case (A-2) and (B-2) in Table \ref{table_2}, we have $h^1(N_{C/X})=0$ by Lemma \ref{resno1} and \eqref{struse}. For the remaining cases, since the computation is similar to each other, we only prove the claim for the case (C-1) in Table \ref{table_2}. From the twisted structure sequence \eqref{struse} and Lemma \ref{resno}, we have a long exact sequence
\begin{equation}\label{connect}
\cdots\lr \rH^0(N_{C/X}|_L)\stackrel{\partial}{\lr} \rH^1(N_{C/X}|_L(-1))\lr \rH^1(N_{C/X}) \lr \rH^1(N_{C/X}|_L)=0.
\end{equation}
We claim that the connecting homomorphism $\partial$ in \eqref{connect} is a non-zero map and thus $\rH^1(N_{C/X})=0$. Let us use the same notation as in Lemma \ref{resno}. Let $C^{i}(\cF)$ be the $i$-th \v{C}ech complex of a coherent sheaf $\cF\in \text{Coh}(X)$ with respect to the affine open covering $\{U_{01}, U_{04}\}$. Note that the double line $C$ (and line $L$) is supported only on these two open charts. Since $h^1(N_{C/X}|_L(-1))=1$ and $h^1(N_{C/X}|_L) =0$ by Lemma \ref{resno}, we have a commutative diagram:
\[
\xymatrix{&&\rH^0(N_{C/X})\ar@{^{(}->}[d]\ar[r]&\rH^0(N_{C/X}|_L)\ar@{^{(}->}[d]\ar@/^3.8pc/[llddd]^{\partial}&\\
0\ar[r]&C^0(N_{C/X}|_L(-1))\ar[r]\ar[d]&C^0(N_{C/X})\ar[r]\ar[d]&C^0(N_{C/X}|_L)\ar[r]\ar[d]&0\\
0\ar[r]&C^1(N_{C/X}|_L(-1))\ar[r]\ar@{->>}[d]&C^1(N_{C/X})\ar[r]\ar@{->>}[d]&C^1(N_{C/X}|_L)\ar[r]\ar[d]&0\\
&\CC\cong \rH^1(N_{C/X}|_L(-1))\ar[r]&\rH^1(N_{C/X})\ar[r]&\rH^1(N_{C/X}|_L)=0.&
}\]
Note that $N_{C/X}|_L(-1)\cong\cH om_{\cO_X}(I_C, I_{L}/I_{C})$ (cf. \eqref{epi}). The generator of the cohomology group $\rH^1(N_{C/X}|_L(-1))$ as an element of the \v{C}ech complex is presented by
\[
\gamma\begin{bmatrix}g_2^2\\f_1\\f_3\end{bmatrix}=\begin{bmatrix}0\\0\\ \frac{g_2}{g_1}\end{bmatrix}
\]
on the intersection $U_{01}\cap U_{04}$. If we choose $(\alpha, \beta)\in C^0(N_{C/X})$ such that
\[
\alpha\begin{bmatrix}b_2^2\\b_3\\a_4\end{bmatrix}=\begin{bmatrix}0\\-b_2\\ b_4\end{bmatrix}, \;
\beta\begin{bmatrix}g_2^2\\f_1\\f_3\end{bmatrix}=\begin{bmatrix}0\\-1\\  -\frac{g_2}{g_1}\end{bmatrix},
\]
then $\alpha-\beta=\gamma\in C^1(N_{C/X})$. Also for the restrictions $\alpha|_L$ and $\beta|_{L}$, the difference is $\alpha|_L-\beta|_{L}=0\in C^1(N_{C/X}|_L)$ and thus $[(\alpha|_{L}, \beta|_{L})]\in \rH^0(N_{C/X}|_L)$. That is, \[\partial([(\alpha|_{L}, \beta|_{L})])=\gamma\] in \eqref{connect} and thus the obstruction space vanishes $\rH^1(N_{C/X})=0$.

 \noindent\textbf{Reducible case}:
Let $C=L\cup L'$ be a $\CC^*$-fixed, a pair of lines. Note that if a line $L$ is free one, then the restricted normal bundle is of the form $N_{C/X}|_{L}\cong \cO_{L}(1)^2\oplus \cO_{L}$ or $\cO_{L}(2)\oplus \cO_{L}(1)^2$ by \eqref{rere1}. In any case, $h^1( N_{C/X})=0$ because the smallest degree term of the restricted normal bundle $N_{C/X}|_{L}(-1)$ on $L$ is $\geq -1$. So let us consider the case where $L$ and $L'$ are non-free. Since $h^1(N_{C/X}|_L(-1))=h^1(N_{C/X}|_{L'}(-1))=1$ and $h^1(N_{C/X}|_L)=h^1(N_{C/X}|_{L'})=0$ by Lemma \ref{lm-pair}, the method is completely the same as the proof of non-reduced case and thus we skip the proof.

 \noindent\textbf{Smooth case}: Clearly, from Proposition \ref{fixcon22} and Proposition \ref{fixcon23}, we have $h^1(N_{C/X})=0$.
\end{proof}

Since the Hilbert scheme $\bH_2(X)$ is birational to the Grassmannian bundle $\bS(X)$ (Proposition \ref{bidiagconic}), the following result is obvious one in the sense of birational equivalence. In this paper, we follow the localization technique developed in \cite[Section 4]{MNOP06}.
\begin{corollary}
The Hilbert scheme $\bH_2(X)$ is a rational variety.
\end{corollary}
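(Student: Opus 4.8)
The plan is to read off rationality from the Bia{\l}ynicki--Birula decomposition of the $\CC^*$-action on $\bH_2(X)$, exactly as packaged in Theorem \ref{prop:onetoone}. As a preliminary I would record that $\bH_2(X)$ is a smooth connected projective variety of dimension $7$: smoothness is the part of Theorem \ref{mainthm} just established, while irreducibility (hence connectedness) and the dimension follow from the birational map $\Phi\colon \bS(X)\dashrightarrow \bH_2(X)$ of Proposition \ref{bidiagconic}, because $\bS(X)=\Gr(3,\cK)$ is the Grassmannian bundle of the rank-$4$ bundle $\cK$ over $\Gr(4,5)\cong\PP^4$, whence $\dim \bS(X)=4+\dim\Gr(3,4)=4+3=7$.

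By Theorem \ref{prop:onetoone} it then suffices to produce a single \emph{isolated} $\CC^*$-fixed point $[C_0]\in\bH_2(X)^{\CC^*}$ whose attracting (or repelling) cell is full-dimensional, i.e. with $\mu^+([C_0])=7$ (or $\mu^-([C_0])=7$). For such a point the fibration of Theorem \ref{prop:onetoone}(2) identifies $\bH_2(X)^+([C_0])\cong \CC^{7}$ with a dense open subset of $\bH_2(X)$, and rationality is immediate. Since $\mu^+([C_0])$ is the number of positive weights of the $\CC^*$-representation on the Zariski tangent space $T_{[C_0]}\bH_2(X)\cong\rH^0(N_{C_0/X})$, the whole problem reduces to locating an isolated fixed conic at which all seven weights of $\rH^0(N_{C_0/X})$ share a common sign.

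Concretely, the smooth fixed conics of Propositions \ref{fixcon22} and \ref{fixcon23} move in positive-dimensional families and so cannot be isolated; the only candidates are therefore the $\CC^*$-fixed double lines of Table \ref{table_2} and the $\CC^*$-fixed pairs of lines built from Table \ref{table_1}. For a chosen candidate $[C_0]$ I would compute the weighted tangent space by the localization method of \cite[Section~4]{MNOP06}: propagate the $\CC^*$-linearization through the equivariant exact sequences \eqref{dounprm} and \eqref{struse} (for a double line) or \eqref{rere1} and \eqref{reno2} (for a pair of lines), using the equivariant splitting types already determined in Lemmas \ref{resno1}, \ref{resno} and \ref{lm-pair}, and then take $\CC^*$-cohomology to obtain the seven weights of $\rH^0(N_{C_0/X})$ together with their signs. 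I expect the source (and symmetrically the sink) of the flow to be the double line supported on the non-free line of most extreme weight in Table \ref{table_1}, for which the seven tangent weights should come out uniformly positive (resp. negative).

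The hardest part is this equivariant bookkeeping rather than any abstract difficulty: one must track the $\CC^*$-weight through every connecting morphism and every \v{C}ech coboundary, not merely the intrinsic splitting type on $L\cong\PP^1$, so that each global section is assigned its correct weight, and one must then confirm that the chosen point really is the source (equivalently, that no weight vanishes or flips sign). Since a smooth projective $\CC^*$-variety has a unique source and a unique sink, only one candidate in the finite lists can carry a full-dimensional cell, and pinning down that candidate among those in Tables \ref{table_1} and \ref{table_2} is where the computation concentrates. I also note that rationality follows far more cheaply from the birational equivalence with $\bS(X)=\Gr(3,\cK)$, a Zariski-locally trivial Grassmannian bundle over the rational base $\PP^4$; the localization argument above is the one flagged in the text, and is the route I would write up.
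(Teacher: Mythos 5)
Your strategy coincides with the paper's: both prove rationality by exhibiting a single isolated $\CC^*$-fixed conic whose tangent space $\rH^0(N_{C/X})$ has all seven weights of one sign and then invoking Theorem \ref{prop:onetoone}. The paper carries this out at the double line (A-2) of Table \ref{table_2} (supported on the non-free line (a)): using the \v{C}ech complex on the two charts $U_{01},U_{02}$ and the equivariant Euler characteristic $\chi(N_{C/X})=C^0(N_{C/X})-C^1(N_{C/X})$ in the style of \cite[Section 4]{MNOP06}, it finds $\rH^0(N_{C/X})=t^4+2t^3+2t^2+2t$, i.e.\ weights $4,3,3,2,2,1,1$, all positive. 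That computation is the entire content of the paper's proof, and it is precisely the step you defer (``I expect \dots the seven tangent weights should come out uniformly positive''). As written, your localization route is a plan rather than a proof: to complete it you must fix a concrete candidate and actually run the weight computation, exactly as the paper does for (A-2).

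Two further points. First, your elimination of smooth conics from the candidate list rests on a false claim: the fixed conics (i) and (ii) of Table \ref{table_3} do \emph{not} move in families --- only \eqref{psmcon} and (iii) are one-parameter families --- so (i) and (ii) are isolated points of $\bH_2(X)^{\CC^*}$ and are legitimate candidates. This does not damage the logic, since one never needs to classify all candidates (producing one fixed point with uniformly positive weights suffices), but the stated dichotomy is wrong. Second, your closing observation is by itself a complete and correct proof: $\bH_2(X)$ is birational to $\bS(X)=\Gr(3,\cK)$ by Proposition \ref{bidiagconic}, a Zariski-locally trivial $\PP^3$-bundle over $\Gr(4,5)\cong\PP^4$, hence rational, and rationality is a birational invariant. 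The paper itself flags this (``obvious \dots in the sense of birational equivalence'') before opting to present the localization computation. So if you write up the cheap argument you are done; if you write up the localization route, the decisive computation cannot be left as an expectation.
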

\begin{proof}
Let $C$ be the non-reduced conic of the case (A-2) in Table \ref{table_2}. By the proof of Theorem \ref{mainthm}, we know that $\rH^1(N_{C/X})=0$. Hence the weight representation of the tangent space of Hilbert scheme $\bH_2(X)$ at $[C]$ can be obtained from the \v{C}ech complex of $N_{C/X}$. Under the notation of Lemma \ref{resno1}, the affine space $U_{01}\cong \CC^4$ (resp. $U_{02}$)   has coordinates $\{a_1, b_1, a_3, b_3\}$ (resp. $\{c_1, c_2, d_1, d_3\}$) with weights $-1$, $1$, $-4$, $-2$ (resp. $-2$, $-3$, $-1$, $-3$) respectively, which are inherited from the original weights of the vector space $V_5$. Applying for these weights to the local section spaces $N_{C/X}(U_{0i})=\cH om (I_C(U_{0i}),\cO_C(U_{0i}))$ for $i=1, 2$ and its intersection part $N_{C/X}(U_{01}\cap U_{02})$, we have
\[\begin{split}
&\rH^0(N_{C/X})=\chi(N_{C/X})=C^0(N_{C/X})-C^1(N_{C/X})\\
&=(t^2+t^2+t^4)(\frac{1}{1-t}+\frac{t^{-1}}{1-t})+(t^4+t^3+t^3)(\frac{1}{1-t^{-1}}+\frac{t^{-2}}{1-t^{-1}})\\
&-\{(t^2+t^2+t^4)(\frac{1}{1-t}+\frac{t^{-1}}{1-t})+(t^2+t^2+t^4)(\frac{t^{-1}}{1-t^{-1}}+\frac{t^{-1}}{1-t^{-1}}\cdot t^{-1})\}\\
&=t^4+2t^3+2t^2+2t.
\end{split}\]
Since all of signs of the weight of $\rH^0(N_{C/X})$ are positive, we proved the claim by Theorem \ref{prop:onetoone}.
\end{proof}

\end{document}